\numberwithin{equation}{section}
\numberwithin{table}{section}
\numberwithin{figure}{section}
\newtheorem{lemma}{Lemma}[section]
\newtheorem{theorem}{Theorem}[section]
\newtheorem{remark}{Remark}[section]
\theoremstyle{definition}
\renewcommand{\div}[1]{\nabla \cdot #1} 
\newcommand{\wt}[1]{\widetilde{#1}}
\newcommand{\wh}[1]{\widehat{#1}}
\newcommand{\MTh}{\mathcal{T}_h}
\newcommand{\lj}{[ \hspace{-2pt} [}
\newcommand{\rj}{] \hspace{-2pt} ]}
\def\al{\alpha}
\def\eps{\varepsilon}
\def\na{\nabla}
\def\pa{\partial}
\def\x{\times}
\def\Lam{\Lambda}
\def\Om{\Omega}
\newcommand{\mb}[1]{\mathbb{#1}}
\newcommand{\mc}[1]{\mathcal{#1}}
\newcommand{\abs}[1]{\left\lvert#1\right\rvert}
\newcommand{\nm}[2]{\|\,#1\,\|_{#2}}
\newcommand{\snm}[2]{\abs{\,#1\,}_{#2}}
\newcommand{\Lr}[1]{\left(#1\right)}
\newcommand{\jump}[1]{[\![#1]\!]}
\newcommand{\aver}[1]{\left\{\!\{#1\right\}\!\}}
\newcommand{\set}[2]{\{\,#1\,\mid\,#2\}}
\newcommand{\enernm}[1]{\|\!|\,#1\,\|\!|}
\DeclareMathOperator{\argmin}{arg min}
\def\dx{\,\mathrm{d}x}
\def\ds{\mathrm{d}s}
\definecolor{lightgray}{gray}{0.9}
\begin{document}
\title[DG Method with One Unknown Per Element]{An Arbitrary-Order Discontinuous Galerkin Method with One Unknown Per Element}
\author[R. Li]{Ruo Li}
\address{CAPT, LMAM and School of Mathematical Sciences, Peking
University, Beijing 100871, P.R. China} \email{rli@math.pku.edu.cn}

\author[P.-B. Ming]{Pingbing Ming}
\address{LSEC, Academy of Mathematics and Systems Science, Chinese Academy of Sciences,
  No.55, Zhong-Guan-Cun East Road, Beijing, 100190, P.R. China}
\address{School of Mathematical Sciences, University of Chinese Academy of Sciences,
  No. 19A, Yu-Quan Road, Beijing, 100049, P.R. China}\email{mpb@lsec.cc.ac.cn}

\author[Z.-Y. Sun]{Zhiyuan Sun}
\address{School of Mathematical Sciences, Peking University, Beijing
  100871, P.R. China} \email{zysun@math.pku.edu.cn}

\author[Z.-J. Yang]{Zhijian Yang}
\address{School of Mathematics and Statistics, Wuhan University, Wuhan
  430072, P.R. China} \email{zjyang.math@whu.edu.cn}
\date{\today}
\maketitle

\begin{abstract}

  We propose an arbitrary-order discontinuous Galerkin method for second-order elliptic problem on
  general polygonal mesh with only one degree of freedom per element. This is
  achieved by locally solving a discrete least-squares over a
  neighboring element patch. Under a geometrical condition on the element patch, we
  prove an optimal a priori error estimates for the energy norm and for the L$^2$ norm. The
  accuracy and the efficiency of the method up to order six on several polygonal meshes are illustrated by a set
  of benchmark problems.

  \noindent\textbf{Keyword:} least-squares reconstruction,
  discontinuous Galerkin method, elliptic problem

  \noindent\textbf{MSC2010:} 49N45; 65N21

\end{abstract}
\section{Introduction}\label{sec:intro}
The discontinuous Galerkin method (DG)~\cite{ReedHill:1973, cockburn2000development} is by now a very standard numerical method to simulate a wide variety of partial differential equations of scientific or engineer interest. Recently there are quite a few work concerning the discontinuous Galerkin method on general polytopic (polygonal or polyhedral)
meshes~\cite{hesthaven2007nodal,Ern:2012, antonietti2013hp, Lipnikov:2013, cangiani2014hp, Wirasaet:2014, Cangiani:2016, Houston:2017}. Unlike the finite element method~\cite{sukumar2004conforming}, the full discontinuity across the element interfaces of the trial and test function spaces of DG method lends itself naturally to the polytopic meshes, which does provide more flexibility in implementation, in particular for domain with microstructures or problems with certain physical constraints. Such meshes may ease the triangulation of complex domains or domains with microstructures. On the other hand, compared to the classical conforming finite element method, the DG method is computationally expensive over a particular computational mesh and approximation order, which is due to the fact that the rapid increasing of
the local degrees of freedom. Moreover, for certain problems, such as fluid solid interaction problems or a heat diffusion problem that is coupled or is embed into a compressible fluid problem, there is no enough information to support a large number of local degrees of freedom. Though this problem is partially solved by the
agglomeration-based physical frame DG method~\cite{Bassi:2012a, Bassi:2014} and the hybrid DG method~\cite{Cockburn:2009}, it is desirable to develop a DG method with less local degrees of freedom while retaining high accuracy.

A common trait is to employ patch reconstruction to achieve high
accuracy. To the best of the authors' knowledge, such reconstruction idea can be traced back
to the endeavors on developing three node plate bending elements and
simple shell elements in the early 1970s; see, for example~\cite{NayUtku:1972,
  HampshireTopping:1992, PhaalCalladine:1992,OnateZarate:1993}. Similar
ideas may also be found in WENO~\cite{Venka:1995} and finite volume method for hyperbolic
conservation laws~\cite{BarthLarson:2002}.

This motivates us to use patches of a piecewise constant
function to reconstruct a piecewise high order polynomial on each
element, which is achieved by solving a discrete least-squares approximation problem over
element patch. Such approach has been used in~\cite{li2012efficient} to reconstruct piecewise effective tensor field from scattered data for the multiscale partial differential equations. This new space is a sub-space of the commonly used
finite element space corresponding to DG methods. This new finite element space may be combined with any other DG formulations to numerically solve even more general elliptic problems such as plate bending problem~\cite{Sun:2017}, Stokes flow problems, and eigenvalue problems, just name a few. As a starting point, we employ the Interior Penalty discontinuous Galerkin (IPDG) method~\cite{arnold1982interior} with this reconstructed finite element space to solve Poisson problem. Under a mild condition on the geometry of the element patch, we proved that this
reconstructed finite element space admits optimal approximation properties in certain broken Sobolev norms, by which we proved the optimal error estimates in the DG-energy norm and in the L$^2$ norm of the proposed method.

Our method possesses several attractive features. First, arbitrary order accuracy may be
achieved with increasing the order of the reconstruction, while there is only one degree of freedom per element, by contrast to the standard DG method, which requires at least three unknowns on each element. From this aspect of view, the proposed method has a flavor of finite volume method. Second, the method can be used on any shape of elements, which may be triangles, quadrilaterals, polygons in two dimension, or tetrahedron, prism, pyramid, hexahedron
in three dimension. In particular, the method may be used on the
hybrid mesh, which is nowadays quite common in simulations because it
can handle the complicated domain or even reduce the total number of
unknowns~\cite{Yamakawa:2009}. Third, the reconstruction procedure of the proposed method is stable with respect to the small perturbation of the data, which is of practical interesting due to the measurement error. Our results for the reconstruction procedure is of independent interest for the discrete least-squares~\cite{reichel1986Polyappbyls}.

A closely related approach is a special DG method proposed in~\cite{Larsson:2012}. The authors introduced a family of
continuous linear finite elements for the Kirhhoff-Love plate model. A continuous linear interpolation of the deflection field is employed to
reconstruct a discontinuous quadratic deflection field by solving a local least-squares problem over element patch. It is worth mentioning that one of their reconstruction method is the same with the second order constrained reconstruction in~\cite{li2012efficient}. Moreover, this method only applies to structured mesh. Another closely related method is the cell-centered Galerkin method presented in~\cite{DiPierto:2014}. The authors developed an arbitrary-order discretization method of diffusion problems on general polyhedral mesh. The cornerstone
of this method is the locally reconstructed discrete gradient operator and a stabilized term, while our method is the locally reconstructed finite element space. This method has also been successfully extended to mixed form~\cite{DiPierto:2017} recently.

The rest of the paper is organized as follows. In \S~\ref{sec:basis}, we describe
the reconstruction finite element space and prove its approximation
properties and the stability properties of the reconstruction procedure. In \S~\ref{sec:weakform}, we present the
interior penalty discontinued Galerkin method for Poisson problem with
the reconstructed approximation space and prove a priori error
estimate, and in \S~\ref{sec:examples}, numerical results for
second order elliptic problems in two dimension are presented. Finally, in \S~\ref{sec:conclusion}, we summarize
the work and draw some conclusions.

Throughout this paper, we shall use standard notations for Sobolev
spaces, norms and seminorms, cf.~\cite{AdamsFournier:2003}; see, for example,
\(\|u\|_{H^1(D)}{:}=\nm{u}{L^2(D)}+\nm{\na u}{L^2(D)}\) for any bounded domain $D$. We use $C$ as
a generic constant independent of the mesh size, which may change from
line to line.  We mainly focus on two dimensional problem though most
results are valid in three dimension.
\section{Approximation Space}\label{sec:basis}
Let $\Om$ be a polygonal domain in $\mb{R}^2$. The mesh
$\MTh$ is a triangulation of $\Om$ with polygons $K$, which may not be convex. Here $h{:}=\max_{K\in\MTh}h_K$ with $h_K$ the diameter of $K$. We denote $\abs{K}$ the area of $K$. Let $\mb{P}_n(D)$ be a set of polynomial in two variables with total degree at most $n$ confined to domain $D$, where $D$ may be an element $K$ or an agglomeration of the elements belong to $\MTh$. We assume that the mesh $\MTh$ satisfies the
following shape regularity conditions, which were introduced originally in~\cite{Brezzi:2009} to study the convergence of mimetic finite difference. Detailed discussion on such conditions can be found in~\cite[\S 1.6]{DaVeiga2014}.

There exist
\begin{enumerate}
\item an integer number $N$ independent of $h$;
\item a real positive number $\sigma$ independent of $h$;
\item a compatible sub-decomposition $\wt{\MTh}$
\end{enumerate}
such that
\begin{enumerate}
\item[{\bf A1}\;]any element $K$ admits a sub-decomposition
  $\wt{\MTh}|_K$ that consists of at most $N$ triangles $T$.
\item[{\bf A2}\;]Any $T\in\wt{\MTh}$ is shape-regular in the sense of
  Ciarlet-Raviart~\cite{ciarlet:1978}: there exists $\sigma$ such that
  $h_T/\rho_T\le\sigma$, where $\rho_T$ is the radius of the largest
  ball inscribed in $T$.
\end{enumerate}

Assumptions {\bf A1} and {\bf A2} impose quite weak constraints on the triangulation, which may contain elements with quite general shapes, for example, non-convex or degenerate elements are allowed.

The above shape regularity assumptions lead to some useful consequences,
which will be extensively used in the later analysis.
\begin{enumerate}
\item[{\bf M1}]For any $T\in\wt{\MTh}$, there exists $\rho_1\ge 1$
  that depends on $N$ and $\sigma$ such that $h_K/h_T\le\rho_1$.

\item[{\bf M2}][{\it Agmon inequality}]\;There exists $C$ that depends
  on $N$ and $\sigma$, but independent of $h_K$ such that
\begin{equation}\label{eq:agmon}
\nm{v}{L^2(\pa K)}^2\le C\Lr{h_K^{-1}\nm{v}{L^2(K)}^2+h_K\nm{\na
    v}{L^2(K)}^2}\qquad\text{for all\quad}v\in H^1(K).
\end{equation}

\item[{\bf M3}][{\it Approximation property}]\;There exists $C$ that
  depends on $N,r$ and $\sigma$, but independent of $h_K$ such that for
  any $v\in H^{r+1}(K)$, there exists an approximation polynomial
  $\wt{v}\in\mb{P}_r(K)$ such that
\begin{equation}\label{eq:app}
\nm{v-\wt{v}}{L^2(K)}+h_K\nm{\na(v-\wt{v})}{L^2(K)}\le
Ch_K^{r+1}\snm{v}{H^{r+1}(K)}.
\end{equation}

\item[{\bf M4}][{\it Inverse inequality}]\;For any $v\in\mb{P}_r(K)$,
  there exists a constant $C$ that depends only on $N,r,\sigma$ and $\rho_1$
  such that
\begin{equation}\label{eq:inverse}
\nm{\na v}{L^2(K)}\le Ch_K^{-1}\nm{v}{L^2(K)}.
\end{equation}
\end{enumerate}

Note that we have not list all the mesh conditions in~\cite{Brezzi:2009} and~\cite[\S 1.6]{DaVeiga2014}, while the above two assumptions {\bf A1} and {\bf A2} suffice for our purpose. The {\em Agmon inequality} {\bf M2} and the {\em Approximation property} {\bf M3} have been proved in~\cite[\S 1.6.3]{DaVeiga2014}. As to {\bf M3}, one may take the approximation polynomial $\wt{v}$ as the averaged Taylor polynomial of order $r+1$~\cite{brenner2007mathematical}. The inverse inequality {\bf M4} can be proved as follows. For any $v\in\mb{P}_r(K)$, the restriction $v|_T\in\mb{P}_r(T)$, using the following standard inverse inequality on triangle $T$~\cite{ciarlet:1978}, we have
\[
\nm{\na v}{L^2(T)}\le C_{\text{inv}}h_T^{-1}\nm{v}{L^2(T)}.
\]
where $C_{\text{inv}}$ depends on $\sigma$ and $r$ while is independent of $h_T$. Summing up all $T\in\wt{T}_h|_K$, and using {\bf M1}, we obtain
\begin{align*}
\nm{\na v}{L^2(K)}^2&=\sum_{T\subset\wt{\mc{T}}_h|_K}\nm{\na v}{L^2(T)}^2\le C_{\text{inv}}^2\rho_1^2 h_K^{-2}\sum_{T\subset\wt{\mc{T}}_h|_K}\nm{v}{L^2(T)}^2\\
&=C_{\text{inv}}^2\rho_1^2 h_K^{-2}\nm{v}{L^2(K)}^2.
\end{align*}
This gives~\eqref{eq:inverse} with $C=C_{\text{inv}}\rho_1$.

A combination of~\eqref{eq:agmon} and~\eqref{eq:inverse} yields the {\em discrete trace inequality}: for any $v\in\mb{P}_r(K)$,
\begin{equation}\label{eq:traceinv}
\nm{v}{L^2(\pa K)}\le C\Lr{1+C_{\text{inv}}\rho_1}h_K^{-1/2}\nm{v}{L^2(K)}.
\end{equation}
\begin{remark}
The above four inequalities~\eqref{eq:agmon},~\eqref{eq:app},~\eqref{eq:inverse} and~\eqref{eq:traceinv} are
the foundation to derive the error estimate for the IPDG method~\cite{arnold1982interior}, which are also valid over the polygonal meshes satisfying different shape regular conditions; see, for example~\cite[\S 1.4]{Ern:2012} and~\cite{Mu:2014}).
\end{remark}
\subsection{Reconstruction operator}
Given the triangulation $\MTh$, we define the reconstruction operator in a piecewise manner as
follows. For each element $K\in\MTh$, we firstly assign a sampling node
$x_K \in K$ that is preferably in the interior of the element $K$, and construct an element patch $S(K)$. The element patch $S(K)$ usually contains $K$ and some elements around $K$. There are many different ways to find the
sampling nodes and construct the element patch. For example, we may let the barycenter of the element $K$ as the
sampling node, while it can be more flexible due to the stability property of the least-squares reconstruction; cf. Lemma~\ref{lema:perturb}. The element patch may be built up
in the following two ways. The first way is that we initialize $S(K)$ as $K$, and add all the Moore neighbors (elements with nonempty intersection with the closure of $K$)~\cite{Sullivan:2001} into $S(K)$ recursively until sufficiently large number of elements are collected into the element patch. Such kind of construction has been used in~\cite{li2012efficient} to reconstruct effective tensor field from scattered data. The second way is the same with the first one except that we use the Von Neumann neighbor (adjacent edge-neighboring elements)~\cite{Sullivan:2001} instead of Moore neighbor. An example for such $S(K)$ is shown in Figure~\ref{patch_ponits}. We denote by $t$ the recursion depth for the element patch $S(K)$.
\begin{figure}
  \begin{center}
    \includegraphics[width=0.3\textwidth]{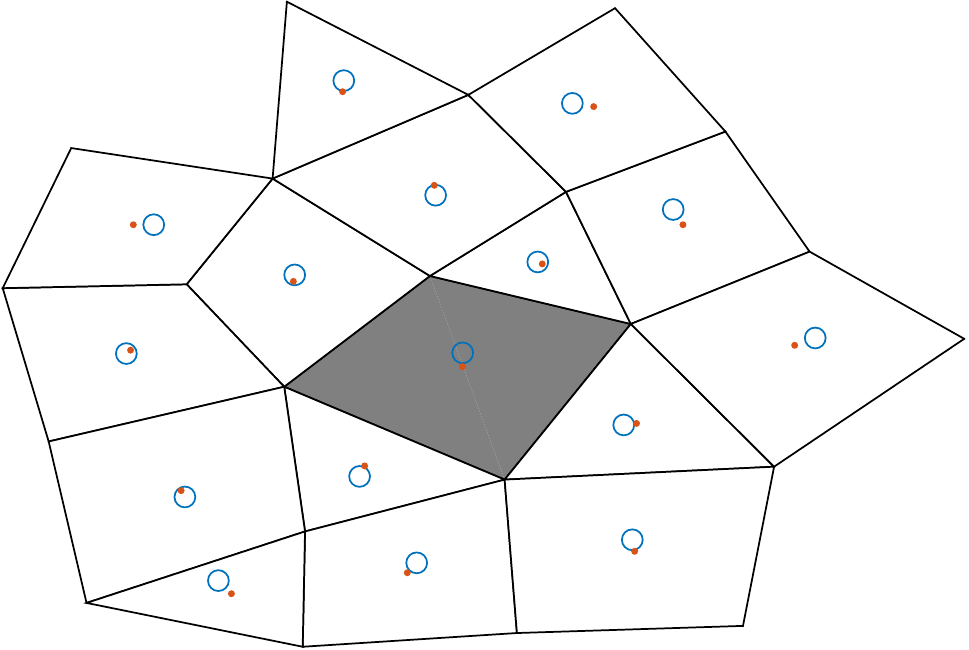}
    \includegraphics[width=0.3\textwidth]{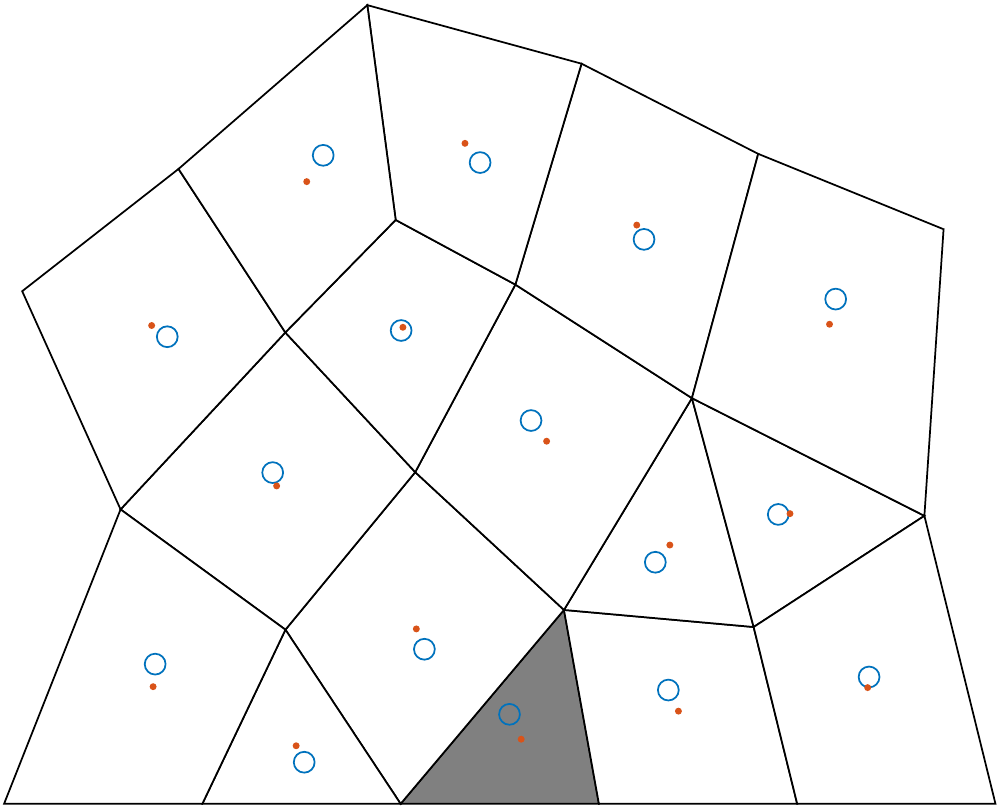}
    \includegraphics[width=0.3\textwidth]{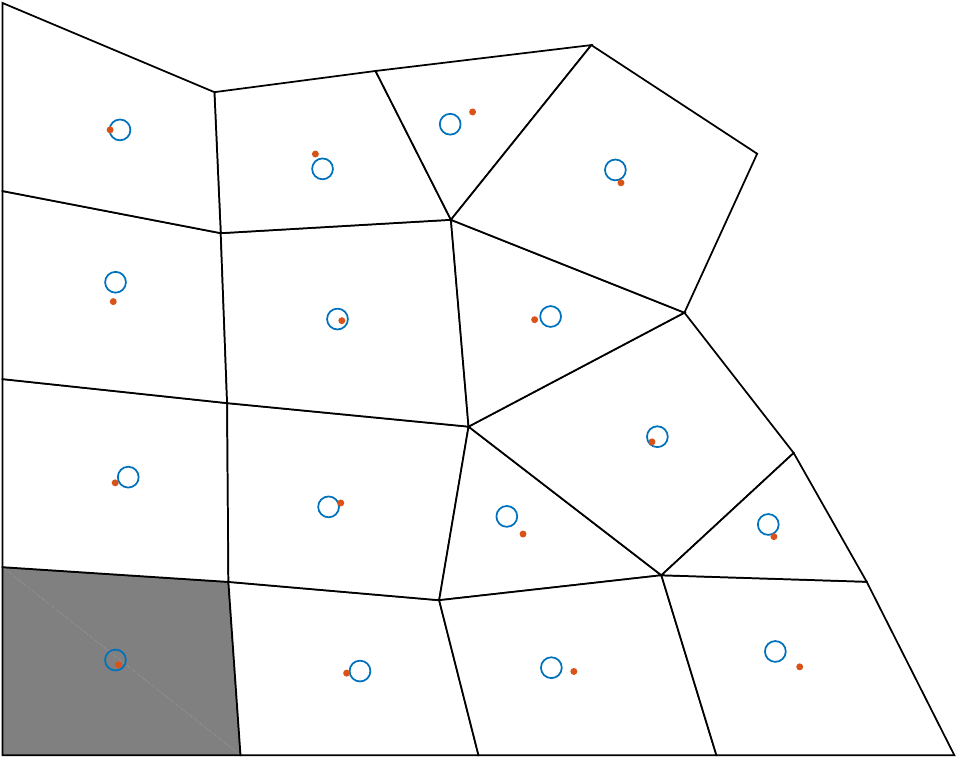}
    \caption{The element patches $S(K)$ in the interior of the domain
      (left), along the boundary of the domain (middle) and at the corner of the domain (right),
      together with randomly perturbed sampling points for Example 3
      in \S~\ref{sec:examples}. Here the element $K$ is marked in
      black and the sampling nodes in $\mc{I}(K)$ are the barycenters of the elements, which are marked in dots, and the
      perturbed sampling nodes in $\wt{\mc{I}}(K)$ are marked as small circles.}
    \label{patch_ponits}
  \end{center}
\end{figure}

We denote $\mc{I}(K)$ the set of the sampling nodes belonging to
$S(K)$ with $\#\mc{I}(K)$ its cardinality, and let $\#S(K)$ be the
number of elements belonging to $S(K)$. These two numbers are equal. We
define $d_K{:}=\text{diam}\;S(K)$ and $d=\max_{K\in\MTh}d_K$. Moreover, we assume that $S(K)$
satisfies the following geometrical assumption.
\noindent\vskip .5cm {\bf Assumption A}\; For every $K\in\MTh$, there
exist constants $R$ and $r$ that are independent of $K$ such that
$B_r\subset S(K)\subset B_R$ with $R\ge 2r$, and $S(K)$ is star-shaped
with respect to $B_r$, where $B_\rho$ is a disk with radius $\rho$.

As a direct consequence of the above assumption, we have the following
characterization of $S(K)$.
\begin{lemma}\label{lema:geometry}
If {\bf Assumption A} is valid, then for all element $K\in\MTh$, the
element patch $S(K)$ satisfies an interior cone condition, and
there exists a uniform bound $\gamma$ for the chunkiness parameter of $S(K)$.
\end{lemma}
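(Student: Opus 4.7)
The plan is to unpack each of the two claims as a direct consequence of Assumption A, which provides exactly the two ingredients one needs: a diameter upper bound (via $S(K)\subset B_R$) and a uniform lower bound on the star-shape radius (namely $r$).

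For the chunkiness parameter, I would invoke the definition directly. Recall that for a domain $D$ that is star-shaped with respect to at least one ball, the chunkiness parameter is $\gamma_D := \mathrm{diam}(D)/\rho^*(D)$, where $\rho^*(D)$ denotes the supremum of the radii of balls with respect to which $D$ is star-shaped. Assumption A immediately gives $\mathrm{diam}(S(K))\le 2R$ (from $S(K)\subset B_R$) and $\rho^*(S(K))\ge r$ (since $S(K)$ is star-shaped with respect to $B_r$). Combining these yields the uniform bound $\gamma_{S(K)}\le 2R/r =: \gamma$, independent of $K$.

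For the interior cone condition, the approach is to exploit that, by star-shapedness, every point $x\in\overline{S(K)}$ satisfies $\mathrm{conv}(\{x\}\cup B_r)\subset \overline{S(K)}$, and then to extract from this convex hull a cone with apex $x$ whose aperture and height depend only on $r$ and $R$. Let $c$ denote the center of $B_r$. When $|x-c|\ge r$, the tangent rays from $x$ to $\partial B_r$ cut out a cone with axis along $c-x$, half-aperture $\arcsin(r/|x-c|)\ge \arcsin(r/(2R))$, and height bounded below by elementary geometry in terms of $r$ and $R$. When $x\in B_r$, a cone of uniform size about $x$ already sits inside $B_r\subset S(K)$, so the construction is immediate. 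Taking the minimum of the parameters across the two cases gives a cone aperture and height depending only on the ratio $R/r$.

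The main (minor) obstacle is the transitional regime where $|x-c|$ is near $r$, in which a naive choice of aperture or height may fail to stay inside $\mathrm{conv}(\{x\}\cup B_r)$; this is handled by choosing slightly reduced uniform parameters that work in both regimes simultaneously. Apart from this bookkeeping, the lemma is essentially a restatement of Assumption A in the vocabulary of chunkiness and interior cones, and one could alternatively simply appeal to the classical result (e.g.\ Brenner and Scott, Proposition 4.2.14) that star-shapedness with respect to a ball implies the interior cone condition, with cone parameters controlled by the chunkiness parameter already bounded in the first step.
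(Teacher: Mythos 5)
Your proposal is correct and follows essentially the same route as the paper: the chunkiness bound is obtained identically, by reading $\mathrm{diam}\,S(K)\le 2R$ and the star-shape radius $\ge r$ off Assumption~A to get $\gamma=2R/r$. The only difference is that for the interior cone condition the paper simply cites Narcowich--Ward--Wendland (Proposition~2.1 of that reference), which yields a cone of radius $r$ and aperture $2\arcsin\bigl(r/(2R)\bigr)$, whereas you sketch the underlying tangent-cone construction from $\mathrm{conv}(\{x\}\cup B_r)$ directly; your construction recovers the same aperture, so the two arguments coincide in substance.
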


\begin{proof}
By~\cite[Proposition 2.1]{Nar:2005}, if {\bf Assumption A} holds true,
then the element patch $S(K)$ satisfies an interior cone
condition with radius $r$ and angel $\theta=2\arcsin\dfrac{r}{2R}$.

By definition~\cite[Definition 4.2.16]{brenner2007mathematical}, the
chunkiness parameter $\gamma_K$ is defined as the ratio between the diameter of
$S(K)$ and the radius of the largest ball to which $S(K)$ is
star-shaped. This leads to the following bound
\[
\gamma_K{:}=\dfrac{d_K}{r}\le\dfrac{2R}{r}.
\]
Let $\gamma{:}=2R/r$, we obtain a uniform bound on the chunkiness parameter for all
$K\in\MTh$.
\end{proof}

Let $U_h$ be the piecewise constant space associated with $\MTh$,
i.e.,
\[
U_h{:}=\set{v\in L^2(\Om)}{v|_K\in\mb{P}_0(K)}.
\]
For any $v\in U_h$ and for any $K\in\MTh$, we reconstruct a
high order polynomial $\mc{R}_K v$ of degree $m$ by solving the
following discrete least-squares.
\begin{equation}\label{eq:leastsquares}
\mc{R}_K
v=\argmin_{p\in\mb{P}_m(S(K))}\sum_{x\in\mc{I}(K)}\abs{v(x)-p(x)}^2.
\end{equation}
A global reconstruction operator $\mc{R}$ is defined by
$\mc{R}|_K=\mc{R}_K$. Given $\mc{R}$, we embed $U_h$ into a
discontinuous finite element space with piecewise polynomials of order $m$, and denote
$V_h=\mc{R}U_h$.

In what follows, we make the following assumption on the sampling node set
$\mc{I}(K)$.
\noindent\vskip .5cm {\bf Assumption B}\; For any $K\in\MTh$ and
$p\in\mb{P}_m(S(K))$,
\begin{equation}\label{assumption:uniqueness}
p|_{\mc{I}(K)}=0\quad\text{implies\quad}p|_{S(K)}\equiv 0.
\end{equation}

This assumption implies the uniqueness, and equally the existence of
the discrete least-squares~\eqref{eq:leastsquares}. {\bf Assumption B} requires that $\#\mc{I}(K)$ cannot be too small, which is at least $(m+2)(m+1)/2$ to guarantee the unisolvability of the discrete least-squares. A quantitative version of this assumption is
\[
\Lambda(m, \mc{I}(K))<\infty
\]
with
\begin{equation}\label{eq:cons}
\Lambda(m, \mc{I}(K)){:}=\max_{p\in\mb{P}_m(S(K))}
\dfrac{\nm{p}{L^\infty(S(K))}}{\nm{p|_{\mc{I}(K)}}{\ell_\infty}}.
\end{equation}

The reconstruction procedure is robust with respect to the small
perturbation of the sampling nodes due to the following stability
result. In particular, $\Lambda(m, \mc{I}(K))$ remains bounded with respect to small perturbation.
This problem is of practical interest because both the sampled
values and the positions of the sampling nodes are affected by the
measurable errors.
\begin{lemma}\label{lema:perturb}
Let $S(K)$ be the element patch defined above and $g\in
C^1(S(K))$. If we assume that
\begin{enumerate}
\item There exists $\alpha>0$ such that
\begin{equation}\label{eq:recstable1}
\nm{g}{L^\infty(S(K))}\le\alpha\nm{g|_{\mc{I}(K)}}{\ell_\infty}.
\end{equation}

\item $S(K)$ admits a Markov-type inequality in the sense that there
  exists $\beta>0$ such that
\begin{equation}\label{eq:markovgeneral}
\nm{\na g}{L^\infty(S(K))}\le\beta\nm{g}{L^\infty(S(K))}.
\end{equation}
\end{enumerate}

Then for any $\delta\in(0,1)$, there exists
$\eps=\delta/(\alpha\beta)$ such that for any sampling node set
$\wt{\mc{I}}(K)$ that is a perturbation of $\mc{I}(K)$ in the sense
that $\wt{\mc{I}}(K)\subset\mc{I}(K)+B(0,\eps)$ with $B$ centers at
$0$ with radius $\eps$, the following stability estimate is valid:
\begin{equation}\label{eq:perturbstab}
\nm{g}{L^\infty(S(K))}\le\dfrac\alpha{1-\delta}\nm{g|_{\wt{\mc{I}}(K)}}{\ell_\infty}.
\end{equation}
\end{lemma}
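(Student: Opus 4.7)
The plan is a fixed-point / absorption argument driven by the Markov-type inequality~\eqref{eq:markovgeneral}. Enumerate $\mc{I}(K)=\{x_1,\dots,x_n\}$ and $\wt{\mc{I}}(K)=\{\wt x_1,\dots,\wt x_n\}$ with $\abs{\wt x_i-x_i}\le\eps$. For each $i$, I would apply the mean-value theorem along the segment from $x_i$ to $\wt x_i$ to obtain
\[
\abs{g(x_i)-g(\wt x_i)}\le\nm{\na g}{L^\infty(S(K))}\abs{\wt x_i-x_i}\le\beta\eps\,\nm{g}{L^\infty(S(K))},
\]
where the last step uses hypothesis~\eqref{eq:markovgeneral}.

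Taking the triangle inequality pointwise and then the maximum over $i$ gives
\[
\nm{g|_{\mc{I}(K)}}{\ell_\infty}\le\nm{g|_{\wt{\mc{I}}(K)}}{\ell_\infty}+\beta\eps\,\nm{g}{L^\infty(S(K))}.
\]
Plugging this into the sampling stability hypothesis~\eqref{eq:recstable1} yields
\[
\nm{g}{L^\infty(S(K))}\le\alpha\nm{g|_{\wt{\mc{I}}(K)}}{\ell_\infty}+\alpha\beta\eps\,\nm{g}{L^\infty(S(K))}.
\]
With the choice $\eps=\delta/(\alpha\beta)$ the coefficient of the last term is exactly $\delta<1$, so I can absorb it into the left-hand side to obtain $(1-\delta)\nm{g}{L^\infty(S(K))}\le\alpha\nm{g|_{\wt{\mc{I}}(K)}}{\ell_\infty}$, which is~\eqref{eq:perturbstab}.

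The main technical wrinkle I anticipate is justifying the use of the gradient bound on the segment joining $x_i$ to $\wt x_i$: a priori $\wt x_i$ could lie slightly outside $S(K)$, where $\nm{\na g}{L^\infty(S(K))}$ does not directly control $\na g$. Because $S(K)$ is star-shaped with respect to a ball $B_r$ of positive radius (Lemma~\ref{lema:geometry}), one can handle this either by shrinking $\eps$ slightly and arguing that perturbations of size $\eps\ll r$ keep the segment inside the star-shaped hull, or, in the situation actually used in the paper where $g$ is a polynomial and~\eqref{eq:markovgeneral} is the genuine Markov inequality, by extending $g$ to $\mathbb R^2$ while keeping control of $\nm{\na g}{L^\infty}$ on any bounded enlargement of $S(K)$. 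Either way, the absorption step is the substantive content of the lemma; everything else is essentially the triangle inequality.
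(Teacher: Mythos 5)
Your proof is correct and follows essentially the same route as the paper: a Taylor/mean-value estimate along the segment from a sampling node to its perturbation, the Markov-type inequality to bound the gradient, and absorption of the $\delta$-term after combining with~\eqref{eq:recstable1} (the paper simply works with the single maximizing node $x^\ast\in\mc{I}(K)$ rather than all nodes at once). Your remark about the perturbed node possibly leaving $S(K)$ is a legitimate point that the paper's proof also glosses over, and your suggested fixes are reasonable.
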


This stability estimate for the reconstruction procedure depends on the assumptions~\eqref{eq:recstable1} and~\eqref{eq:markovgeneral}. The validity of these two assumptions hinges on certain geometrical condition on the element patch. For example, if the element patch $S(K)$ is convex, then by~\cite[Lemma 3.5]{li2012efficient}, the first assumption~\eqref{eq:recstable1} is valid with $\al=2$, and the second assumption~\eqref{eq:markovgeneral} is valid with $\beta=4m^2/w(K)$ due to the {\em Markov inequality} of {\sc Wilhelmsen}~\cite{Wilhelmsen:1974} for the convex domain, where $w(K)$ is the width of the convex set $S(K)$. A more general assumption for the validity of these two assumptions may be found in Remark~\ref{remark:convex}.
\begin{proof}
Let $x^\ast\in\mc{I}(K)$ satisfy
$\abs{g(x^\ast)}=\nm{g|_{\mc{I}(K)}}{\ell_\infty}$. There exists
$\wt{x}\in\wt{\mc{I}}(K)$ such that $\abs{\wt{x}-x^\ast}\le\eps$. By
Taylor's expansion,
\begin{align*}
\abs{g(x^\ast)}&\le\abs{g(\wt{x})}+\eps\max_{x\in S(K)}\abs{\na
  g(x)}\\ &\le\nm{g|_{\wt{\mc{I}}(K)}}{\ell_\infty}+\eps\beta\nm{g}{L^\infty(S(K))},
\end{align*}
where we have used the Markov's
inequality~\eqref{eq:markovgeneral}. Therefore, we obtain
\begin{align*}
\nm{g}{L^\infty(S(K))}&\le\alpha\nm{g|_{\mc{I}(K)}}{\ell_\infty}=\al\abs{g(x^\ast)}\\ &\le\al\nm{g|_{\wt{\mc{I}}(K)}}{\ell_\infty}+\eps\al\beta\nm{g}{L^\infty(S(K))}\\ &=\al\nm{g|_{\wt{\mc{I}}(K)}}{\ell_\infty}+\delta\nm{g}{L^\infty(S(K))},
\end{align*}
which immediately implies the stability estimate~\eqref{eq:perturbstab}.
\end{proof}

The following properties of the reconstruction operator $\mc{R}_K$ is
proved in~\cite[Theorem 3.3]{li2012efficient}, which is of vital
importance to our error estimate.
\begin{lemma}\label{theorem:localapp}
If {\em Assumption B} holds, then there exists a unique solution
of~\eqref{eq:leastsquares} for any $K\in\MTh$. The unique solution is
denoted by $\mc{R}_K v$.

Moreover $\mc{R}_K$ satisfies
\begin{equation}\label{eq:invariance}
\mc{R}_Kg=g\quad\text{for all\quad}g\in\mb{P}_m(S(K)).
\end{equation}
The stability property holds true for any $K\in\MTh$ and $g\in
C^0(S(K))$ as
\begin{equation}\label{eq:continuous}
\nm{\mc{R}_K g}{L^{\infty}(K)}\le\Lambda(m , \mc{I}(K)) \sqrt{\#
  \mc{I}(K)}\nm{g|_{\mc{I}(K)}}{\ell_\infty},
\end{equation}
and the quasi-optimal approximation property is valid in the sense
that
\begin{equation}\label{eq:approximation}
\nm{g -\mc{R}_K g}{L^{\infty}(K)}\le\Lambda_m
\inf_{p\in\mb{P}_m(S(K))} \nm{g - p}{L^{\infty}(S(K))},
\end{equation}
where $\Lambda_m{:}=\max_{K\in \MTh}
\{1+\Lambda(m,\mc{I}(K))\sqrt{\# \mc{I}(K)}\}$.
\end{lemma}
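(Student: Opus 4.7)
The plan is to treat the four claims in order, since each subsequent claim builds on the previous one. First, I would settle existence and uniqueness of the minimizer by recasting~\eqref{eq:leastsquares} as minimizing $\|Ac-b\|_{\ell^2}^2$, where the columns of $A$ are the evaluations of a basis of $\mb{P}_m(S(K))$ at the nodes of $\mc{I}(K)$ and $b=v|_{\mc{I}(K)}$. Assumption~\textbf{B}, equivalently $\Lambda(m,\mc{I}(K))<\infty$, says precisely that the map $p\mapsto p|_{\mc{I}(K)}$ is injective on $\mb{P}_m(S(K))$, i.e.\ $A$ has full column rank, so the normal equations $A^TAc=A^Tb$ admit a unique solution. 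Invariance~\eqref{eq:invariance} is then immediate: if $g\in\mb{P}_m(S(K))$, the functional in~\eqref{eq:leastsquares} attains its minimum value $0$ at $p=g$, and uniqueness forces $\mc{R}_Kg=g$.

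For the stability estimate~\eqref{eq:continuous}, I would combine the definition of the Lebesgue-type constant in~\eqref{eq:cons} with the non-expansiveness of the $\ell^2$ projection. Since $\mc{R}_Kg\in\mb{P}_m(S(K))$, the definition of $\Lambda(m,\mc{I}(K))$ gives
\[
\nm{\mc{R}_K g}{L^\infty(K)}\le\nm{\mc{R}_K g}{L^\infty(S(K))}\le\Lambda(m,\mc{I}(K))\nm{(\mc{R}_K g)|_{\mc{I}(K)}}{\ell_\infty}.
\]
The restriction $(\mc{R}_Kg)|_{\mc{I}(K)}$ is the $\ell^2$-orthogonal projection of $g|_{\mc{I}(K)}$ onto the finite-dimensional subspace $\{p|_{\mc{I}(K)}:p\in\mb{P}_m(S(K))\}$, so its $\ell^2$-norm is bounded by $\nm{g|_{\mc{I}(K)}}{\ell^2}\le\sqrt{\#\mc{I}(K)}\,\nm{g|_{\mc{I}(K)}}{\ell_\infty}$, and the trivial bound $\|\cdot\|_{\ell_\infty}\le\|\cdot\|_{\ell^2}$ closes the argument.

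The quasi-optimal approximation estimate~\eqref{eq:approximation} then follows from a Lebesgue-type lemma using invariance and linearity of $\mc{R}_K$. For arbitrary $p\in\mb{P}_m(S(K))$, write
\[
g-\mc{R}_K g=(g-p)-\mc{R}_K(g-p),
\]
using $\mc{R}_K p=p$. Applying the triangle inequality on $K$ and then the stability bound~\eqref{eq:continuous} to $\mc{R}_K(g-p)$ yields
\[
\nm{g-\mc{R}_K g}{L^\infty(K)}\le\bigl(1+\Lambda(m,\mc{I}(K))\sqrt{\#\mc{I}(K)}\bigr)\nm{g-p}{L^\infty(S(K))},
\]
since $\nm{(g-p)|_{\mc{I}(K)}}{\ell_\infty}\le\nm{g-p}{L^\infty(S(K))}$. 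Taking the infimum over $p$ and bounding the prefactor by $\Lambda_m$ delivers~\eqref{eq:approximation}.

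Most of the work is bookkeeping: the only conceptually substantive step is recognizing that the sampled values of $\mc{R}_Kg$ form the $\ell^2$-projection of the sampled data onto the polynomial evaluation subspace, so that the $\ell^2\!\to\!\ell^\infty$ passage only costs a factor $\sqrt{\#\mc{I}(K)}$ rather than the generally larger norm of a pointwise interpolation operator. Once that observation is in hand, all four claims fall out in a few lines, and Assumption~\textbf{B} serves exclusively to guarantee that the discrete least-squares is well-posed and that $\Lambda(m,\mc{I}(K))$ is finite.
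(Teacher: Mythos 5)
Your proof is correct and complete: Assumption \textbf{B} gives full column rank of the evaluation matrix and hence unique solvability of the normal equations, invariance follows from uniqueness of the zero-residual minimizer, the $\ell^2$-non-expansiveness of the discrete orthogonal projection yields the $\sqrt{\#\mc{I}(K)}$ factor in~\eqref{eq:continuous}, and the Lebesgue-lemma decomposition $g-\mc{R}_Kg=(g-p)-\mc{R}_K(g-p)$ delivers~\eqref{eq:approximation}. The paper itself does not prove this lemma but defers to \cite[Theorem 3.3]{li2012efficient}; your argument is the standard one underlying that result, so the approach is essentially the same.
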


By the above lemma, we conclude that the reconstruction $\mc{R}_K g$
is a nearly optimal uniform approximation polynomial to $g$ provided
that $\Lam(m,\mc{T}(K))\sqrt{\#\mc{I}(K)}$ can be bounded. As a direct
consequence of this property, we shall prove below that such nearly
optimal approximation property of the reconstruction is also valid
with respect to the broken H$^1$-norms.
\begin{lemma}
If {\em Assumption B} holds, then there exists $C$
that depends on $N,\sigma$ and $\gamma$ such that
\begin{align}
\nm{g-\mc{R}_{K} g}{L^{2}(K)}&\le C\Lam_m
h_Kd_K^m\snm{g}{H^{m+1}(S(K))}.\label{eq:l2app}\\ \nm{\na(g-\mathcal{R}_K
  g)}{L^2(K)}&\le
C\Lr{h_K^m+\Lam_{m}d_K^m}\snm{g}{H^{m+1}(S(K))}.\label{eq:h1app}
\end{align}
\end{lemma}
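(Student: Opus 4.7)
The plan is to derive the $L^{2}$ bound first from the quasi-optimal $L^{\infty}$ bound \eqref{eq:approximation}, and then to derive the $H^{1}$ bound by inserting a local polynomial approximation on $K$ (supplied by \textbf{M3}), using the inverse inequality \textbf{M4} on polynomials, and finally reducing to the $L^{2}$ estimate already in hand.

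The first key ingredient is an $L^{\infty}$ version of Bramble--Hilbert on the patch $S(K)$. By Lemma~\ref{lema:geometry} the patch is star-shaped with respect to a ball and has chunkiness parameter bounded by a universal constant $\gamma$. A standard scaling argument (pull $S(K)$ back to a reference star-shaped domain of unit diameter, apply Bramble--Hilbert for the averaged Taylor polynomial, and then invoke the Sobolev embedding $H^{m+1}\hookrightarrow L^{\infty}$ which is valid in two dimensions for $m\ge 1$) produces a polynomial $\wt{g}\in\mb{P}_m(S(K))$ such that
\[
\nm{g-\wt{g}}{L^{\infty}(S(K))}\le C\,d_K^{m}\snm{g}{H^{m+1}(S(K))},
\]
with $C=C(\gamma)$. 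Combined with \eqref{eq:approximation} this gives
\[
\nm{g-\mc{R}_K g}{L^{\infty}(K)}\le C\Lam_m d_K^{m}\snm{g}{H^{m+1}(S(K))}.
\]
From here, \eqref{eq:l2app} follows from $\nm{v}{L^{2}(K)}\le|K|^{1/2}\nm{v}{L^{\infty}(K)}$ together with the trivial bound $|K|\le C h_K^{2}$.

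For \eqref{eq:h1app} I would fix an approximating polynomial $\wt{g}_K\in\mb{P}_m(K)$ guaranteed by \textbf{M3} and split
\[
\nm{\na(g-\mc{R}_K g)}{L^{2}(K)}\le \nm{\na(g-\wt{g}_K)}{L^{2}(K)}+\nm{\na(\wt{g}_K-\mc{R}_K g)}{L^{2}(K)}.
\]
The first term is directly bounded by $Ch_K^{m}\snm{g}{H^{m+1}(K)}$ thanks to \eqref{eq:app}. The second term involves a polynomial of degree at most $m$ on $K$, so the inverse inequality \eqref{eq:inverse} converts it into an $L^{2}$ norm at the cost of a factor $h_K^{-1}$. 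Inserting $g$ and applying the triangle inequality once more gives
\[
h_K^{-1}\nm{\wt{g}_K-\mc{R}_K g}{L^{2}(K)}\le h_K^{-1}\bigl(\nm{\wt{g}_K-g}{L^{2}(K)}+\nm{g-\mc{R}_K g}{L^{2}(K)}\bigr),
\]
where the first piece is $Ch_K^{m}\snm{g}{H^{m+1}(K)}$ by \textbf{M3} and the second piece is controlled by \eqref{eq:l2app} just established, yielding $C\Lam_m d_K^{m}\snm{g}{H^{m+1}(S(K))}$. Summing the three contributions and using $\snm{g}{H^{m+1}(K)}\le\snm{g}{H^{m+1}(S(K))}$ delivers \eqref{eq:h1app}.

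The principal obstacle is the first step: establishing the $L^{\infty}$ Bramble--Hilbert estimate on the patch $S(K)$ with a constant independent of $h$. This is where the geometric setup of \S\ref{sec:basis} is essential, because $S(K)$ itself is not shape-regular in the usual Ciarlet--Raviart sense, but rather satisfies only the chunkiness bound coming from \textbf{Assumption A} and Lemma~\ref{lema:geometry}. Once that $L^{\infty}$ estimate is in place, everything else is bookkeeping: a triangle inequality on $K$, the inverse inequality \textbf{M4} to pay the factor $h_K^{-1}$ on the polynomial part, and the local approximation property \textbf{M3} on $K$ to handle the non-polynomial part.
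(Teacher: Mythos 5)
Your argument is correct and follows essentially the same route as the paper: the $L^\infty$ Bramble--Hilbert estimate on the star-shaped patch (which the paper obtains by citing Dupont--Scott for the averaged Taylor polynomial, exactly the scaling argument you sketch) combined with \eqref{eq:approximation} and $\nm{v}{L^2(K)}\le|K|^{1/2}\nm{v}{L^\infty(K)}$ for \eqref{eq:l2app}, then the identical triangle-inequality/inverse-inequality decomposition through the \textbf{M3} polynomial for \eqref{eq:h1app}. The step you flag as the principal obstacle is precisely the one the paper delegates to \cite[Theorem 3.2]{DupontScott:1980}, using the uniform chunkiness bound from Lemma~\ref{lema:geometry}.
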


\begin{proof}
By Lemma~\ref{lema:geometry}, the element patch is star-shaped
with respect to a disk $B_r$ with a uniform chunkness parameter,
using~\cite[Theorem 3.2]{DupontScott:1980}, we take $p=Q^{m+1}g\in\mb{P}_m$ with
$Q^{m+1}g$ the averaged Taylor polynomial of order $m+1$ in the right-hand side
of~\eqref{eq:approximation}, then
\begin{equation}\label{eq:starapp}
\inf_{p
  \in\mb{P}_m(S(K))}\nm{g-p}{L^{\infty}(S(K))}\le\nm{g-Q^{m+1}g}{L^{\infty}(S(K))}\le
Cd_K^m\snm{g}{H^{m+1}(S(K))},
\end{equation}
where $C$ depends on $N,m,\sigma$ and $\gamma$.

Substituting the above estimate~\eqref{eq:starapp}
into~\eqref{eq:approximation}, we obtain
\[
\nm{g-\mc{R}_{K} g}{L^{2}(K)}\le\abs{K}^{1/2}\|g-\mc{R}_{K}
g\|_{L^{\infty}(K)} \le C\Lam_m h_Kd_K^m\snm{g}{H^{m+1}(S(K))}.
\]
This gives~\eqref{eq:l2app}.

Next, let $\wh{g}_m$ be the approximation polynomial
in~\eqref{eq:app} for function $g$, using the {\em inverse
  inequality}~\eqref{eq:inverse} and the approximation
estimate~\eqref{eq:l2app}, we obtain
\begin{align*}
\nm{\na(g-\mathcal{R}_K g)}{L^2(K)}&\le\nm{\na(g-\wh{g}_m)}{L^2(K)}
+\nm{\na(\wh{g}_m-\mathcal{R}_K g)}{L^2(K)}\\ &\le
Ch_K^m\snm{g}{H^{m+1}(K)}+Ch_K^{-1}\nm{\wh{g}_m-\mc{R}_{K}
  g}{L^{2}(K)}\\ &\le
Ch_K^m\snm{g}{H^{m+1}(K)}+Ch_K^{-1}\nm{g-\wh{g}_m}{L^2(K)}
+Ch_K^{-1}\nm{g-\mc{R}_{K} g}{L^{2}(K)}\\ &\le
C\Lr{h_K^m+\Lam_{m}d_K^m}\snm{g}{H^{m+1}(S(K))}.
\end{align*}
This gives~\eqref{eq:h1app} and completes the proof.
\end{proof}
\begin{remark}
If $S(K)$ is convex, then the constants $C$ in~\eqref{eq:l2app} and~\eqref{eq:h1app} are independent of the
chunkness parameter $\gamma$ as proven in~\cite{Dekel:2004}.
\end{remark}

The above lemma indicates that the approximation accuracy of the reconstruction procedure boils down to the
boundedness of $\Lam_m$. We shall seek for conditions of the
triangulation $\MTh$, under which $\Lam_m$ is uniformly
bounded. The authors in~\cite{li2012efficient} have proved
that if the element patch $S(K)$ is convex and the mesh
triangulation is quasi-uniform, then $\Lam_m$ is uniformly
bounded. However, both conditions are not so realistic in
implementation. In next lemma, we shall show that the {\bf Assumption A} is
more suitable in practice, under which $\Lam(m,\mc{I}(K))$ is also
uniformly bounded.
\begin{lemma}\label{lema:bd}
If {\em Assumption A} holds, then for any $\eps>0$, if $r>m\sqrt{2Rh_K(1+1/\eps)}$, then we may
take $\Lam(m,\mc{I}(K))$ as
\begin{equation}\label{eq:uppbd}
\Lam(m,\mc{I}(K))=1+\eps.
\end{equation}
Moreover, if $r>2m\sqrt{Rh_K}$, we may take $\Lam(m,\mc{I}(K))=2$.
\end{lemma}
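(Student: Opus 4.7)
The plan is to show $\|p\|_{L^\infty(S(K))}\le(1+\eps)\,\|p|_{\mc{I}(K)}\|_{\ell_\infty}$ for every $p\in\mb{P}_m(S(K))$, which bounds the ratio defining $\Lam(m,\mc{I}(K))$ in~\eqref{eq:cons} by $1+\eps$. I would begin by picking $x^*\in\overline{S(K)}$ with $|p(x^*)|=M:=\|p\|_{L^\infty(S(K))}$. Since $S(K)$ is tiled by elements of $\MTh$, the point $x^*$ lies in some $K'\subset S(K)$ whose sampling node $x_{K'}\in\mc{I}(K)\cap K'$ satisfies $|x^*-x_{K'}|\le h_{K'}\le h_K$. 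The segment $[x^*,x_{K'}]$ sits in $B_R$ by convexity, so the mean value theorem, combined with a gradient bound on $S(K)$, yields
\[
M\le|p(x_{K'})|+h_K\,\|\nabla p\|_{L^\infty(S(K))}\le\|p|_{\mc{I}(K)}\|_{\ell_\infty}+h_K\,\|\nabla p\|_{L^\infty(S(K))}.
\]

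The second ingredient is a Markov-type inequality tailored to star-shaped domains: under Assumption~A,
\[
\|\nabla p\|_{L^\infty(S(K))}\le\frac{2m^2 R}{r^2}\,\|p\|_{L^\infty(S(K))}\qquad\text{for all }p\in\mb{P}_m(S(K)).
\]
The heuristic derivation is a weighted Wilhelmsen estimate: for every $x\in S(K)$, the chord of $S(K)$ from $x$ through the centre $x_c$ of $B_r$ to the opposite side of $B_r$ has length $|x-x_c|+r\le 2R+r$, lies in $S(K)$ by star-shape, and a univariate Markov on it controls the directional derivative along $(x_c-x)/|x_c-x|$. Transverse directional derivatives are then handled by coupling this estimate with the aspect ratio of the cone $\mathrm{conv}(\{x\}\cup B_r)\subset S(K)$, which is where the factor $R/r^2$ enters.

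Substituting the Markov bound into the Taylor estimate gives
\[
M\le \|p|_{\mc{I}(K)}\|_{\ell_\infty}+\frac{2m^2 R h_K}{r^2}\,M.
\]
The hypothesis $r>m\sqrt{2Rh_K(1+1/\eps)}$ rearranges exactly to $\frac{2m^2Rh_K}{r^2}<\frac{\eps}{1+\eps}$, and rearranging yields $M\le(1+\eps)\,\|p|_{\mc{I}(K)}\|_{\ell_\infty}$, which is~\eqref{eq:uppbd}. Specialising to $\eps=1$ gives the simpler condition $r>2m\sqrt{Rh_K}$ and the bound $\Lam(m,\mc{I}(K))\le 2$.

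The main obstacle is pinning down the Markov-type estimate with the specific constant $2m^2 R/r^2$. Straight Wilhelmsen on $B_R$ (width $2R$) gives only $m^2/R$, missing the role of the inscribed ball; Wilhelmsen on the convex cone $\mathrm{conv}(\{x\}\cup B_r)\subset S(K)$ gives $m^2/r$ but controls only derivatives along rays into $B_r$. Reconciling the two by using the inscribed ball $B_r$ for the tangential direction and the diameter bound $\mathrm{diam}\,S(K)\le 2R$ for the transverse direction, with the separation $R\ge 2r$ from Assumption~A, is the delicate step that produces the correct $R/r^2$ scaling.
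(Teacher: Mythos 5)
Your overall strategy is exactly the paper's: take an extremal point $x^\ast$ of $|p|$ on $\overline{S(K)}$, compare $p(x^\ast)$ with the value at the nearest sampling node via Taylor/mean-value, invoke a Markov-type inequality on $S(K)$ to absorb the gradient term, and solve the resulting inequality for $\nm{p}{L^\infty(S(K))}$. The closing algebra (the hypothesis on $r$ being equivalent to $2m^2Rh_K/r^2<\eps/(1+\eps)$) also matches the paper.

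The genuine gap is the one you flag yourself: the Markov inequality with constant $2m^2R/r^2$ is asserted, not proved, and the heuristic chord-plus-cone argument you sketch does not deliver that constant. The paper closes this step by citation rather than by construction: Lemma~\ref{lema:geometry} (via \cite[Proposition 2.1]{Nar:2005}) already records that {\bf Assumption A} forces $S(K)$ to satisfy an interior cone condition with radius $r$ and aperture $\theta=2\arcsin\bigl(r/(2R)\bigr)$, and for domains with an interior cone condition \cite[Proposition 11.6]{Wendland:2005} gives the ready-made Markov inequality $\nm{\na p}{L^\infty(S(K))}\le \tfrac{2m^2}{r\sin\theta}\nm{p}{L^\infty(S(K))}$. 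Since $\theta/2\le\pi/6$ one has $\sin\theta\ge r/(2R)$, so the provable constant is $4m^2R/r^2$ --- twice what you claim. The paper compensates for this factor of $2$ by bounding the distance from $x^\ast$ to the nearest sampling node by $h_K/2$ rather than your $h_K$ (which presumes the sampling node of the element containing $x^\ast$ is within $h_K/2$ of every point of that element); the product $\tfrac{h_K}{2}\cdot\tfrac{4m^2R}{r^2}=\tfrac{2m^2Rh_K}{r^2}$ is then exactly what the hypothesis on $r$ controls. With your distance bound $h_K$ and the justified Markov constant $4m^2R/r^2$, you would only reach the conclusion under the stronger hypothesis $r>2m\sqrt{Rh_K(1+1/\eps)}$, off by a factor $\sqrt{2}$. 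So either replace your heuristic by the cone-condition Markov inequality together with the sharper distance bound, or accept a threshold on $r$ weaker than the one stated; as written the proposal does not establish the lemma with the stated constants.
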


If {\bf Assumption A} is valid, we usually have $R\simeq th_K$. The above result suggests that $r\simeq m\sqrt{t} h_K$ for the uniform boundedness of $\Lam(m,\mc{I}(K))$.

By~\cite[Theorem 1.2.2.2. and Corollary 1.2.2.3]{Grisvard:1985}, any
convex domain satisfies the uniform cone property. Therefore,
Lemma~\ref{lema:bd} generalizes the corresponding result
in~\cite[Lemma 3.5]{li2012efficient} because it applies to more
general element patch.
\noindent\vspace{.5cm}

{\em Proof of Lemma~\ref{lema:bd}\;}
Let $x^\ast\in\overline{S(K)}$ such that $\abs{p(x^\ast)}=\max_{x\in\overline{S(K)}}\abs{p(x)}$, and
$x_{\ell}\in\mc{I}(K)$ such that $\abs{x_{\ell}-x^\ast}=\min_{y\in\mc{I}(K)}\abs{x^\ast-y}$. Then
\[
\abs{x_{\ell}-x^\ast}\le h_K/2.
\]
By Taylor's expansion, we have
\[
p(x_{\ell})=p(x^\ast)+(x_\ell-x^\ast)\cdot\na p(\xi_x)
\]
with $\xi_x$ a point on the line with end points $x_\ell$ and
$x^\ast$. This gives
\[
\abs{p(x^\ast)}\le\abs{p(x_\ell)}+\dfrac{h_K}{2}\max_{x\in
  S(K)}\abs{\na p(x)}.
\]

By Lemma~\ref{lema:geometry}, the element patch $S(K)$ satisfies
an interior cone condition with radius $r$ and aperture
$\theta=2\arcsin(r/2R)$. By~\cite[Proposition 11.6]{Wendland:2005}, we
have the following Markov inequality:
\begin{equation}\label{eq:markovcone}
\nm{\na
  p}{L^\infty(S(K))}\le\dfrac{2m^2}{r\sin\theta}\nm{p}{L^\infty(S(K))}\quad\text{for
  all\quad}p\in\mb{P}_m(S(K)).
\end{equation}
Using the fact that $\theta/2\le\pi/6$, we have
\[
\sin\theta=2\sin\dfrac{\theta}{2}\cos\dfrac{\theta}{2}=2\dfrac{r}{2R}\cos\dfrac{\theta}{2}\ge\dfrac{r}{2R},
\]
Combing the above three inequalities, we obtain
\[
\nm{p}{L^\infty(S(K))}\le\nm{p|_{\mc{I}(K)}}{\ell_\infty}
+\dfrac{2m^2Rh_K}{r^2}\nm{p}{L^\infty(S(K))}.
\]
Using the condition on $r$, we obtain~\eqref{eq:uppbd}.  \qed
\begin{remark}\label{remark:convex}
If {\bf Assumption A} is true, then both assumptions in Lemma~\ref{lema:perturb} are valid with $\al=1+\eps$ and $\beta=2m^2/(r\sin\theta)$, respectively.
\end{remark}

In view of the above estimate for $\Lam(m,\mc{I}(K))$, it seems we
should make $r$ as bigger as possible. Hence we should ask for the
largest disk contained in $S(K)$. If $S(K)$ is star shaped to certain
point $x_0$, then $r$ equals to the smallest distance from $x_0$ to
the boundary of $S(K)$ because $S(K)$ is a polygon.

It remains to find an upper bound for $\#\mc{I}(K)$. Under the assumptions on the triangulation $\MTh$ and the assumption on the element patch $S(K)$, it is clear to find an upper bound for $\#\mc{I}(K)$.
\begin{lemma}\label{lema:number}
If {\bf Assumption A1} and {\bf Assumption A2} on the triangulation $\MTh$ and {\bf Assumption A} on the element patch $S(K)$ are valid, then we have
\begin{equation}\label{eq:number}
\#\mc{I}(K)\le \dfrac{\sigma^2\rho_1^2}{N}\dfrac{R^2}{h_K^2}.
\end{equation}
\end{lemma}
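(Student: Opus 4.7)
My plan is a direct area-counting argument. The patch $S(K)$ lies inside a disk of radius $R$, which bounds its area from above, while the shape-regularity forces each element of the patch to occupy at least a definite area of order $h_K^2$; dividing one by the other bounds the number of elements.

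First, since $\mc{I}(K)$ contains exactly one sampling node per element of $S(K)$, one has $\#\mc{I}(K)=\#S(K)$, and it suffices to bound the number of polygons in the patch. The inclusion $S(K)\subset B_R$ gives immediately $|S(K)|\le \pi R^2$.

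For the lower bound on individual element areas, fix $K'\in S(K)$ and choose any triangle $T\in\wt{\MTh}|_{K'}$, which exists by Assumption~A1. Property~M1 yields $h_T\ge h_{K'}/\rho_1$, Assumption~A2 yields $\rho_T\ge h_T/\sigma$, and since $T$ contains its inscribed disk of radius $\rho_T$, $|T|\ge \pi\rho_T^2\ge \pi h_{K'}^2/(\sigma\rho_1)^2$. Combined with the natural comparison $h_{K'}\ge h_K$ for patch elements, this gives $|K'|\ge \pi h_K^2/(\sigma\rho_1)^2$.

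Summing $|K'|$ over $K'\in S(K)$ and comparing with $|S(K)|\le \pi R^2$ then yields the bound on $\#S(K)$; the factor $1/N$ in the statement arises from the at-most-$N$ triangles per element once one accounts for the total sub-triangulation of $S(K)$, which carries a multiplicity $N$ relative to $\#S(K)$. The only delicate point I foresee is the uniform comparison $h_{K'}\ge h_K$ inside the patch, a mild quasi-uniformity condition automatically satisfied for the Moore and Von~Neumann neighbor constructions described earlier; everything else is a direct assembly of the mesh-regularity inequalities already at hand.
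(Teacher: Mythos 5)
Your overall strategy---bounding the area of $S(K)$ above by $\pi R^2$ via Assumption~A, and bounding each element's area below through the inscribed disk of one of its sub-triangles using \textbf{A2} and \textbf{M1}---is exactly the paper's own argument, and that part of your write-up is sound. You are also right to single out the comparison $h_{K'}\gtrsim h_K$ for the \emph{other} elements $K'$ of the patch as the delicate point: the paper's proof makes the same silent assumption when it writes $\#\mc{I}(K)\,|K|\le\pi R^2$, which presupposes that every element of $S(K)$ has area at least that of $K$. Neither \textbf{M1} nor \textbf{A1}--\textbf{A2} delivers this, and it is not ``automatic'' for Moore or Von Neumann neighbours as you claim---a neighbour of $K$ may in principle be much smaller than $K$ under the stated hypotheses. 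So treat that as a local quasi-uniformity hypothesis you are adding, not as something already at hand.

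The genuine gap is the factor $1/N$. Your proposed mechanism---that the sub-triangulation of $S(K)$ ``carries a multiplicity $N$'' relative to $\#S(K)$---runs in the wrong direction: \textbf{A1} says each element contains \emph{at most} $N$ triangles, so the total number of sub-triangles of $S(K)$ is at most $N\,\#S(K)$, not at least. Counting triangles therefore only gives $\#S(K)\le\#\{T\subset S(K)\}$ together with an upper bound $\sigma^2\rho_1^2R^2/h_K^2$ on the latter, and nothing lets you divide by $N$; to gain the extra $1/N$ you would need every element to contain \emph{exactly} $N$ triangles, each of the minimal area $\pi h_K^2/(\sigma\rho_1)^2$. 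Since $N\ge 1$, the factor $1/N$ \emph{strengthens} the claimed bound, so this is not a harmless constant you may wave at. (For what it is worth, the paper's own proof extracts it from the inequality $|K|\ge N\sum_{T\in\wt{\MTh}|_K}|T|$, which cannot be right either, since $|K|=\sum_{T}|T|$ exactly; the honest conclusion of both arguments is \eqref{eq:number} without the $1/N$, which is all that is ever used later.)
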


\begin{proof}
For any element $K\in\MTh$, using {\bf Assumption A}, we obtain
\[
\#\mc{I}(K)\abs{K}\le\pi R^2.
\]
By {\bf Assumption A1}, we bound $\abs{K}$ from below as
\[
\abs{K}\ge N\sum_{T\in\wt{\MTh}|_K}\abs{T}\ge N\pi\rho_T^2.
\]
Using {\bf Assumption A1} and the consequence {\bf M1}, we have
\[
h_K\le\sigma\rho_1\rho_T.
\]
A combination of the above three inequalities gives~\eqref{eq:number}.
\end{proof}

The upper bound~\eqref{eq:number} is independent of the construction approach of the element patch. For the two approaches based on Moore neighbor and von Neumann neighbor, we have $R\simeq th_K$ with $t$ the recursion depth. Hence we have $\#\mc{I}(K)\simeq t^2$, which is consistent with the upper bound proved in~\cite[Lemma 3.4]{li2012efficient}, in which we have assumed that $S(K)$ is convex and the mesh is quasi-uniform.
\section{IPDG with Reconstructed Space for Poisson Problem}\label{sec:weakform}
We shall use DG method with the reconstructed finite element space to solve
second-order elliptic problem. For the sake of clarity and simplicity, we only consider the Poisson problem
\begin{equation}\label{eq:possion}
-\triangle u=f\quad \text{\;in\;}\Om,\qquad u=0\quad
\text{\;on\;}\pa\Om,
\end{equation}
where $\Om$ is a convex polygonal domain and $f$ is a given function in $L^2(\Om)$. The extension to the general second order elliptic problem is straightforward; see, for example, the numerical examples in the next part. We also mention~\cite{Sun:2017} for the implementation of this reconstructed finite element space together with the DG variational formulation in~\cite{mozolevski2003priori} to biharmonic problem.

The approximating problem is to look for $u_h\in U_h$ such that
\begin{equation}\label{eq:vara}
a_h(\mc{R}u_h,\mc{R}v)=(f,\mc{R}v)_h\quad\text{for all\quad}v\in U_h.
\end{equation}
There are many different DG formulations for this problem as
in~\cite{arnold2002unified} be specifying the bilinear form $a_h$ and the source term $(f,\mc{R}v)_h$. To fix ideas, we focus on the IPDG method in~\cite{arnold1982interior}, where $a_h$ and $(f,\mc{R}v)_h$ are defined for any
$v,w\in V_h$ as
\[
a_h(v,w){:}=\sum_{K\in\MTh}\int_K\na v\cdot\na
w\dx-\sum_{e\in\mc{E}_h}\int_e\Lr{\jump{\na v}\aver{w}+\jump{\na
    w}\aver{v}}\ds
+\sum_{e\in\mc{E}_h}\int_e\dfrac{\eta_e}{h_e}\jump{v}\cdot\jump{w}\ds,
\]
and
\[
(f,\mc{R}v)_h{:}=\sum_{K\in\MTh}\int_Kf(x)\mc{R}v(x)\dx,
\]
where $\eta_e$ is a piecewise positive constant. Here $\mc{E}_h$ is the collection of all edges of $\MTh$, and
$\mc{E}_h^{o}$ is the collection of all the interior edges and
$\mc{E}_h^{\pa}$ is the collection of all boundary edges. Moreover, the average
$\aver{v}$ and the jump $\lj v \rj$ of $v$ is defined as follows. Let
$e$ be a common edge shared by elements $K_1$ and $K_2$, and let $n_1$
and $n_2$ be the outward unit normal at $e$ of $K_1$ and $K_2$,
respectively. Given $v_i{:}=\left. v \right|_{\partial K_i}$, we
define
\[
\aver{v}=\dfrac{1}{2}(v_1 + v_2),\quad \lj v \rj = v_1 n_1 + v_2
n_2,\quad\text{on }\ e\in\mc{E}_h^o.
\]
For a vector-valued function $\varphi$, we define $\varphi_1$ and
$\varphi_2$ analogously and let
\[
\aver{\varphi}=\dfrac{1}{2}(\varphi_1+\varphi_2),\quad \lj \varphi \rj
=\varphi_1\cdot n_1+\varphi_2\cdot n_2,\quad\text{on}\ e\in\mc{E}_h^o.
\]
For $e \in\mc{E}_h^{\pa}$, we set
\[
\lj v \rj = vn,\quad \aver{\varphi}=\varphi.
\]

We define the DG-energy norm for any $v\in V_h$ as
\begin{equation}\label{eq:seminorms}
\enernm{v}=\Lr{\sum_{K\in \MTh}\nm{\na v}{L^2(K)}^{2}+ \sum_{e\in
  \mc{E}_h}\abs{e}^{-1}\nm{\jump{v}}{L^2(e)}^2}^{1/2}.
\end{equation}

Using the {\em Agmon inequality}~\eqref{eq:agmon}, the interpolation
estimates~\eqref{eq:l2app} and~\eqref{eq:h1app}, we obtain, for $g\in
H^{m+1}(\Om)$, there exists $C$ that depends on $N,\sigma,\gamma$ and $m$
such that
\begin{equation}\label{eq:optapp}
\enernm{g-\mc{R}g}\le C(h^m+\Lam_md^m)\snm{g}{H^{m+1}(\Om)},
\end{equation}
which implies that the nearly optimal approximation property of the reconstruction is also valid for
the DG-energy norm.

By definition, we obtain the consistency of $a_h$ in the sense that
\[
a_h(u,\mc{R}v)=(f,\mc{R}v)_h\quad\text{for all\quad}v\in U_h.
\]
Therefore, the Galerkin orthogonality holds true.
\begin{equation}\label{eq:GalerkinOrth}
a_h(u-\mc{R}u_h,\mc{R}v)=0\quad\text{for all\quad}v\in U_h.
\end{equation}
This is the starting point of the error estimate.

By {\em the discrete trace inequality}~\eqref{eq:traceinv}, for sufficiently large $\eta_e$, there exist $\alpha$ and $\beta$ that depend on $N,\sigma,\gamma$ and $m$ such that
\begin{align*}
a_h(\mc{R}v,\mc{R}v)&\ge\al\enernm{\mc{R}v}^2\qquad\text{for all\quad} v\in
U_h,\\
\abs{a_h(\mc{R}v,\mc{R}w)}&\le
\beta\enernm{\mc{R}v}\enernm{\mc{R}w}\qquad\text{for all\quad} v,w\in U_h.
\end{align*}
This immediately gives the well-posedness of the approximation
problem~\eqref{eq:vara}. The error estimate is included in the
following
\begin{theorem}\label{thm:error}
Let $u$ and $u_h$ be the solutions of~\eqref{eq:possion} and~\eqref{eq:vara}, respectively. If {\bf Assumption B} holds, then
\begin{equation}\label{eq:cea}
\enernm{u-\mc{R}u_h}\le\Lr{1+\beta/\al}\enernm{u-\mc{R}u}.
\end{equation}
And if $u\in H^{m+1}(\Om)$, then there exists $C$ that depends on $N,\sigma,\gamma$ and $m$ such that
\begin{equation}\label{eq:enererr}
\enernm{u-\mc{R}u_h}\le C\Lr{h^m+\Lam_md^m}\snm{u}{H^{m+1}(\Om)},
\end{equation}
and
\begin{equation}\label{eq:l2err}
\nm{u-\mc{R}u_h}{L^2(\Om)}\le
C\Lr{h^m+\Lam_md^m}(h+d)\snm{u}{H^{m+1}(\Om)}.
\end{equation}
\end{theorem}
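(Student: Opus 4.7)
The estimate \eqref{eq:cea} is a standard C\'ea-type argument once one observes that $\mc{R}u$ itself lies in $V_h$. Indeed, $\mc{R}_K$ depends only on the point values $\{u(x):x\in\mc{I}(K)\}$, so choosing $\bar v\in U_h$ by $\bar v|_{K'}=u(x_{K'})$ for every element $K'$ gives $\mc{R}\bar v|_K=\mc{R}_K u$, hence $\mc{R}u=\mc{R}\bar v\in V_h$. Coercivity, the Galerkin orthogonality \eqref{eq:GalerkinOrth} (applied to the test function $\bar v-u_h\in U_h$, so that its reconstruction $\mc{R}u-\mc{R}u_h\in V_h$), and continuity then yield
\[
\alpha\enernm{\mc{R}u_h-\mc{R}u}^{2} \le a_h(\mc{R}u_h-\mc{R}u,\mc{R}u_h-\mc{R}u) = a_h(u-\mc{R}u,\mc{R}u_h-\mc{R}u) \le \beta\enernm{u-\mc{R}u}\enernm{\mc{R}u_h-\mc{R}u},
\]
so $\enernm{\mc{R}u_h-\mc{R}u}\le(\beta/\alpha)\enernm{u-\mc{R}u}$ and a triangle inequality delivers \eqref{eq:cea}. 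Substituting the broken Sobolev approximation bound \eqref{eq:optapp} into \eqref{eq:cea} immediately gives the energy estimate \eqref{eq:enererr}.

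For \eqref{eq:l2err} I would use an Aubin--Nitsche duality argument. Let $\psi\in H^1_0(\Om)$ solve $-\triangle\psi=u-\mc{R}u_h$ in $\Om$, $\psi=0$ on $\pa\Om$. Since $\Om$ is a convex polygon, elliptic regularity supplies $\psi\in H^2(\Om)$ with $\snm{\psi}{H^2(\Om)}\le C\nm{u-\mc{R}u_h}{L^2(\Om)}$. Integrating by parts elementwise and using $u,\psi\in H^2(\Om)\cap H^1_0(\Om)$ (so that $\jump{u}=\jump{\psi}=0$ on every edge and $\jump{\na\psi}=0$ on interior edges) one obtains the adjoint-consistency identity
\[
\nm{u-\mc{R}u_h}{L^2(\Om)}^{2} = a_h(u-\mc{R}u_h,\psi).
\]
By symmetry of the IPDG form, $\mc{R}\psi\in V_h$ (by the same point-value construction used for $\mc{R}u$), and the Galerkin orthogonality \eqref{eq:GalerkinOrth}, I may subtract $\mc{R}\psi$ in the second argument and use continuity to get
\[
\nm{u-\mc{R}u_h}{L^2(\Om)}^{2} \le \beta\enernm{u-\mc{R}u_h}\enernm{\psi-\mc{R}\psi}.
\]
Applying \eqref{eq:optapp} with $m=1$, together with the uniform bound on $\Lam_1$ furnished by Lemma~\ref{lema:bd}, gives $\enernm{\psi-\mc{R}\psi}\le C(h+d)\snm{\psi}{H^2(\Om)}\le C(h+d)\nm{u-\mc{R}u_h}{L^2(\Om)}$; dividing out one power of the $L^2$ norm and inserting the energy bound \eqref{eq:enererr} produces \eqref{eq:l2err}.

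The step that I expect to require the most care is the adjoint-consistency identity $\nm{u-\mc{R}u_h}{L^2(\Om)}^{2}=a_h(u-\mc{R}u_h,\psi)$: it requires tracking every edge contribution of the form $\jump{\na(u-\mc{R}u_h)}\aver{\psi}$, $\jump{\na\psi}\aver{u-\mc{R}u_h}$, and the penalty term, and verifying that each either vanishes (thanks to the continuity of $u$ and $\psi$ across interior edges, to $\psi\in H^2(\Om)$, and to the homogeneous Dirichlet data) or reassembles into the expected form after elementwise integration by parts. Once this identity is secured, the remainder of the argument is routine and follows the standard Aubin--Nitsche template for symmetric interior penalty schemes.
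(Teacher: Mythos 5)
Your proposal is correct and follows essentially the same route as the paper: a C\'ea-type argument via coercivity, Galerkin orthogonality and continuity for \eqref{eq:cea}, the approximation bound \eqref{eq:optapp} for \eqref{eq:enererr}, and an Aubin--Nitsche duality argument with the convexity-based $H^2$ regularity and \eqref{eq:optapp} at order one for \eqref{eq:l2err}. Your explicit remark that $\mc{R}u=\mc{R}\bar v$ for the piecewise-constant $\bar v$ of point values (so that Galerkin orthogonality applies to the test function $\mc{R}u-\mc{R}u_h$) is a detail the paper leaves implicit, but it does not change the argument.
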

\begin{remark}
If {\bf Assumption A} is valid, then we may reshape the above two estimates into
\begin{equation}\label{eq:finalerr1}
\nm{u-\mc{R}u_h}{L^2(\Om)}+h\enernm{u-\mc{R}u_h}\le Ch^{m+1}\snm{u}{H^{m+1}(\Om)},
\end{equation}
where $C$ depends on $N,\sigma,\gamma,m$ and the recursion depth $t$ of the element patch.

If $S(K)$ is convex, the above error estimate~\eqref{eq:finalerr1} remains true, while
$C$ depends on $N,\sigma,m$ and $t$ but is independent of the chunkness parameter $\gamma$.
\end{remark}

\begin{proof}
Denote $v=\mc{R}u-\mc{R}u_h$, we obtain
\[
a_h(v,v)=a_h(\mc{R}u-u,v)+a_h(u-\mc{R}u_h,v)=a_h(\mc{R}u-u,v),
\]
where we have used the Galerkin orthogonality~\eqref{eq:GalerkinOrth}
in the last step. This implies
\[
\enernm{\mc{R}u-\mc{R}u_h}\le\dfrac{\beta}{\al}\enernm{u-\mc{R}u},
\]
which together with the triangle inequality implies~\eqref{eq:cea}.

Substituting the interpolate estimate~\eqref{eq:optapp}
into~\eqref{eq:cea}, we obtain~\eqref{eq:enererr}.

To show the L$^2$-error estimate~\eqref{eq:l2err}, we use the standard duality argument. Let $\phi$ be the solution of
\[
-\triangle\phi=u-\mc{R}u_h\quad \text{in\;}\Om\qquad \phi=0\quad\text{on\;}\pa\Om.
\]
Using an integration by parts, using the Galerkin orthogonality~\eqref{eq:GalerkinOrth} andthe interpolation estimate~\eqref{eq:optapp} with $m=1$, we obtain
\begin{align*}
\nm{u-\mc{R}u_h}{L^2(\Om)}^2&=\int_{\Om}-\triangle\phi(u-\mc{R}u_h)\dx=a_h(u-\mc{R}u_h,\phi)
=a_h(u-\mc{R}u_h,\phi-\mc{R}\phi)\\
&\le\beta\enernm{u-\mc{R}u_h}\enernm{\phi-\mc{R}\phi}\\
&\le C(h+d)\enernm{u-\mc{R}u_h}\snm{\phi}{H^2(\Om)}.
\end{align*}
Next, as $\Om$ is convex, elliptic regularity gives \(\snm{\phi}{H^2(\Om)}\le C_r\nm{u-\mc{R}u_h}{L^2(\Om)}\) with $C_r$ depending only on the domain $\Om$. Hence, using the energy estimate~\eqref{eq:enererr}, we obtain the L$^2$-error estimate~\eqref{eq:l2err} and complete the proof.
\end{proof}
\section{Numerical examples}\label{sec:examples}
In this section, we present some numerical examples for general second
order elliptic problem of the form
\begin{equation}\label{eq:ellgen}
-\div\Lr{A(x)\na u(x)}=f(x)
\end{equation}
supplemented with various boundary conditions. Here $A$ is a two by
two matrix that satisfies
\begin{equation}\label{eq:ellcons}
c_1(\xi_1^2+\xi_2^2)\le\sum_{i,j=1}^2A_{ij}(x)\xi_i\xi_j\le
c_2\Lr{\xi_1^2+\xi_2^2},\quad\text{a.e.\;}x\in\Omega,
\end{equation}
where $0<c_1\le c_2$ and $\xi_1,\xi_2\in\mb{R}$.

In all the examples below, we take the penalty term $\eta_e$ large
enough to guarantee the coercivity of $a_h$. To be more precise, we
let $\eta_e\geq 3c_2$ for the interior edges $e\in\mc{E}_h^{o}$, where $c_2$ is the ellipticity constant in~\eqref{eq:ellcons}; and $\eta_e$ is taken as $k m^2$ for boundary
edge $e\in\mc{E}_h^{\partial}$ with $k$ a positive constant, while $k$ may vary for different
examples. A direct solver is employed to solve all the resulting
linear systems.
\vskip .5cm \textbf{Example 1.} In the first example, we consider a 2D
Laplace equation with homogeneous Neumann boundary condition posed on
the unit square, i.e., $A(x)$ is a $2\x 2$ identity matrix. We
assume an exact solution and a smooth source term $f$ as
\[
  u(x,y)=\sin(2\pi x)\sin(2 \pi y),\quad f=8\pi^2\sin(2\pi x)\sin(2\pi
  y).
\]
We consider quasi-uniform triangular and quadrilateral meshes, which
are generated by the software {\em Gmsh}~\cite{geuzaine2009gmsh}, as
shown in Figure~\ref{tri_mesh and quad mesh}.
\begin{figure}
  \begin{center}
    \includegraphics[width=0.4\textwidth]{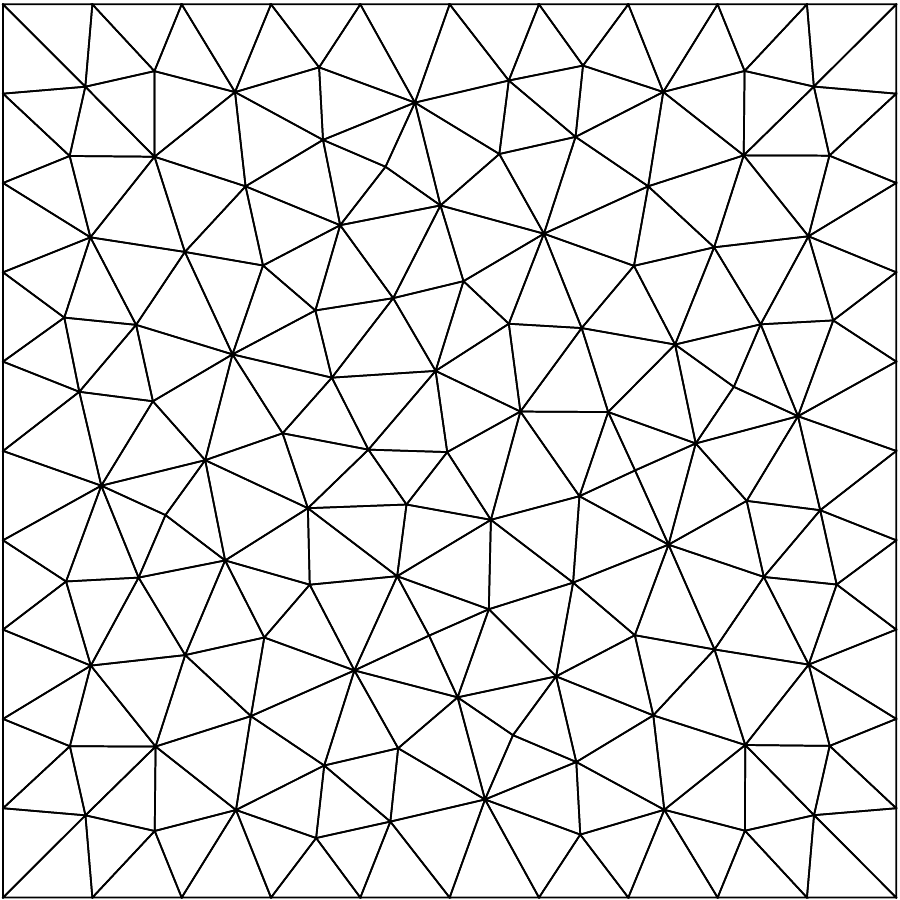}
    \hspace{1cm} \includegraphics[width=0.4\textwidth]{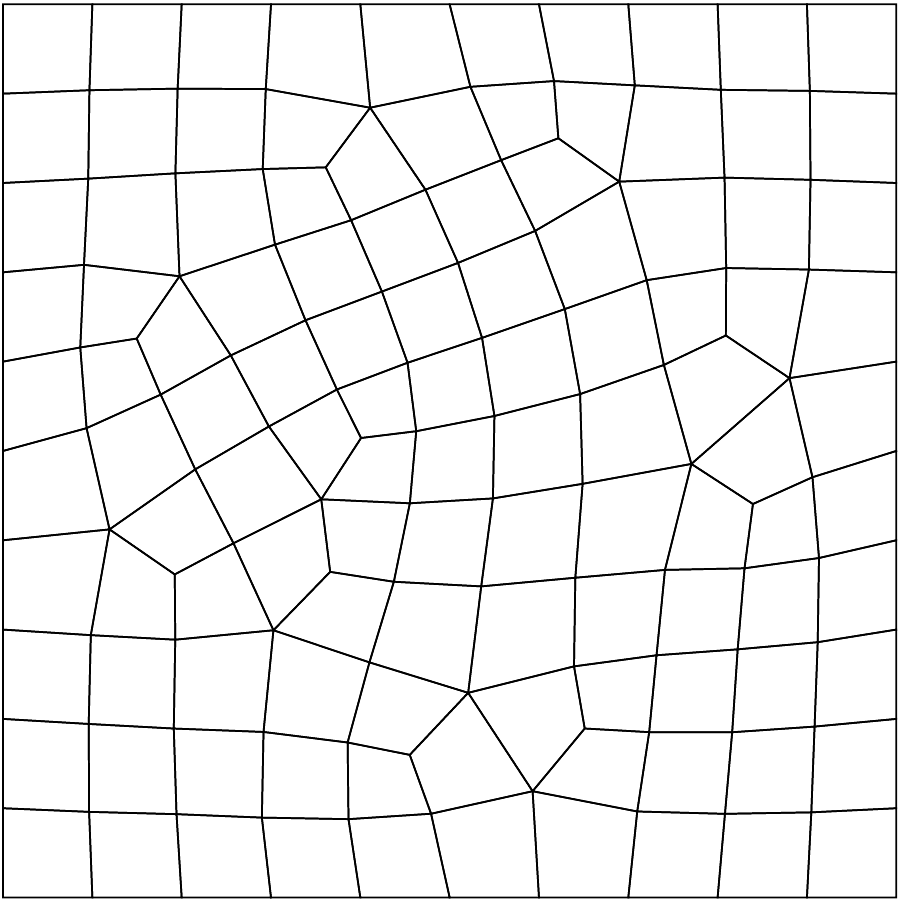}
    \caption{The triangular and quadrilateral meshes for example 1.}
  \label{tri_mesh and quad mesh}
  \end{center}
\end{figure}

We plot convergence rate in Figure~\ref{tri_error} and
Figure~\ref{quad_error} for the triangular and quadrilateral meshes,
respectively. It is clear that the method converges in the
energy norm with rate $m$ and converges in $L^2$ norm with rate $m+1$, where $m$ is the reconstruction order, which is consistent with the
theoretical prediction in Theorem~\ref{thm:error}. The numerical
errors and convergence rates are also presented in Table~\ref{tri_mesh} and
Table~\ref{quad_mesh} for the triangular and quadrilateral meshes,
respectively.
\begin{figure}
  \begin{center}
    \includegraphics[width=0.48\textwidth]{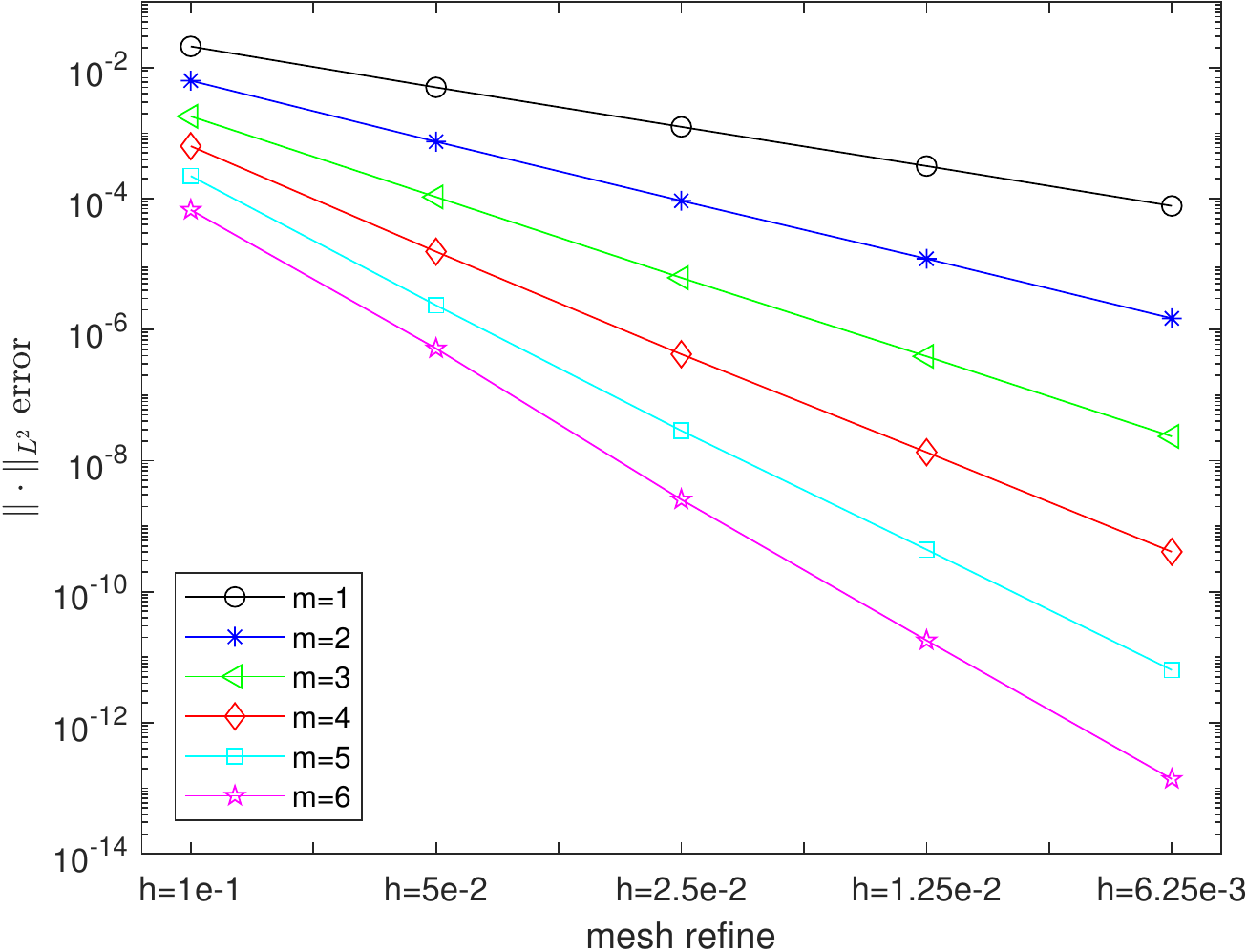}
    \includegraphics[width=0.48\textwidth]{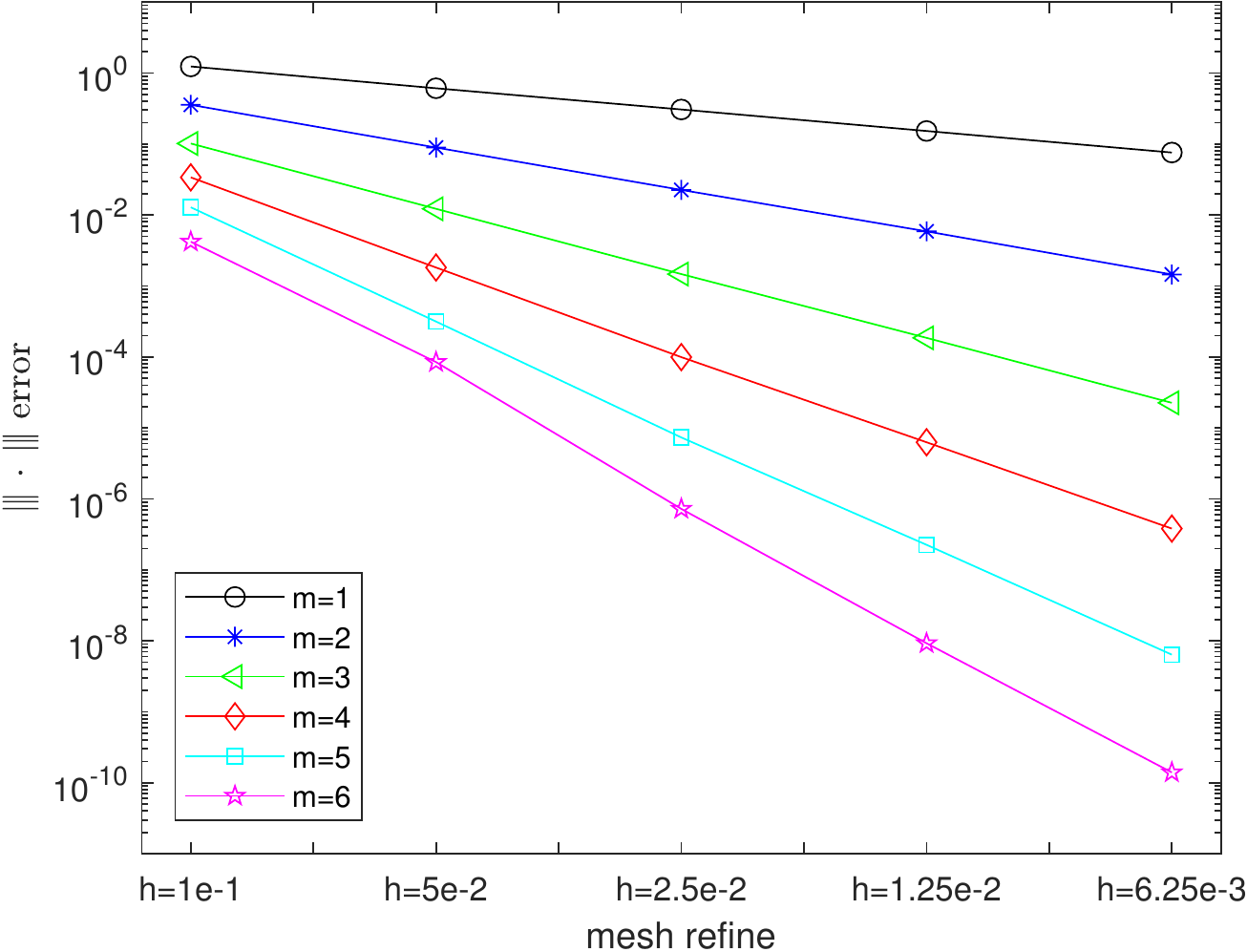}
    \caption{The convergence rate in $L^2$ norm (left) and the energy
      norm (right) for different reconstruction order $m$ on triangular
      meshes for Example 1.}\label{tri_error}
  \end{center}
\end{figure}
\begin{table}[]
  \begin{center}
  \scalebox{1.0}{
    \begin{tabular}{|c|c|c|c|c|c|c|c|}
      \hline & & h=1.0e-1 & 5.0e-2 & 2.5e-2 & 1.25e-2 & 6.25e-3 &
      \\ \cline{3-7} \multirow{-2}{*}{$m$} & \multirow{-2}{*}{Norms} &
      error & error & error & error & error & \multirow{-2}{*}{Rate}
      \\ \hline

      & $\nm{u-u_h}{L^2}$ & 2.10e-02 & 4.98e-03 & 1.24e-03 & 3.14e-04
      & 7.73e-05 & 2.02 \\ \cline{2-8} \multirow{-2}{*}{1} &
      \multicolumn{1}{c|}{$\enernm{u-u_h}$} &
      \multicolumn{1}{c|}{1.23e+00} & 6.10e-01 & 3.06e-01 & 1.52e-01 &
      7.58e-02 & 1.01 \\ \hline

      & $\nm{u-u_h}{L^2}$ & 6.32e-03 & 7.40e-04 & 9.26e-05 & 1.20e-05
      & 1.48e-06 & 3.00 \\ \cline{2-8} \multirow{-2}{*}{2} &
      \multicolumn{1}{c|}{$\enernm{u-u_h}$} &
      \multicolumn{1}{c|}{3.56e-01} & 8.91e-02 & 2.25e-02 & 5.87e-03 &
      1.45e-03 & 1.98 \\ \hline

      & $\nm{u-u_h}{L^2}$ & 1.80e-03 & 1.05e-04 & 6.18e-06 & 3.91e-07
      & 2.34e-08 & 4.05 \\ \cline{2-8} \multirow{-2}{*}{3} &
      \multicolumn{1}{c|}{$\enernm{u-u_h}$} &
      \multicolumn{1}{c|}{1.01e-01} & 1.22e-02 & 1.47e-03 & 1.85e-04 &
      2.25e-05 & 3.03 \\ \hline

      & $\nm{u-u_h}{L^2}$ & 6.32e-04 & 1.54e-05 & 4.21e-07 & 1.34e-08
      & 4.05e-10 & 5.13 \\ \cline{2-8} \multirow{-2}{*}{4} &
      \multicolumn{1}{c|}{$\enernm{u-u_h}$} &
      \multicolumn{1}{c|}{3.38e-02} & 1.81e-03 & 9.93e-05 & 6.27e-06 &
      3.81e-07 & 4.10 \\ \hline

      & $\nm{u-u_h}{L^2}$ & 2.21e-04 & 2.35e-06 & 2.86e-08 & 4.39e-10
      & 6.36e-12 & 6.25 \\ \cline{2-8} \multirow{-2}{*}{5} &
      \multicolumn{1}{c|}{$\enernm{u-u_h}$} &
      \multicolumn{1}{c|}{1.28e-02} & 3.15e-04 & 7.33e-06 & 2.25e-07 &
      6.38e-09 & 5.23 \\ \hline

      & $\nm{u-u_h}{L^2}$ & 6.73e-05 & 5.16e-07 & 2.54e-09 & 1.80e-11
      & 1.38e-13 & 7.25 \\ \cline{2-8} \multirow{-2}{*}{6} &
      \multicolumn{1}{c|}{$\enernm{u-u_h}$} &
      \multicolumn{1}{c|}{4.20e-03} & 8.45e-05 & 7.22e-07 & 9.23e-09 &
      1.39e-10 & 6.28 \\ \hline
  \end{tabular}}\vspace{.3cm}
  \caption{Errors on the triangular meshes for Example
    1.}\label{tri_mesh}
  \end{center}
\end{table}
\begin{figure}
  \begin{center}
    \includegraphics[width=0.48\textwidth]{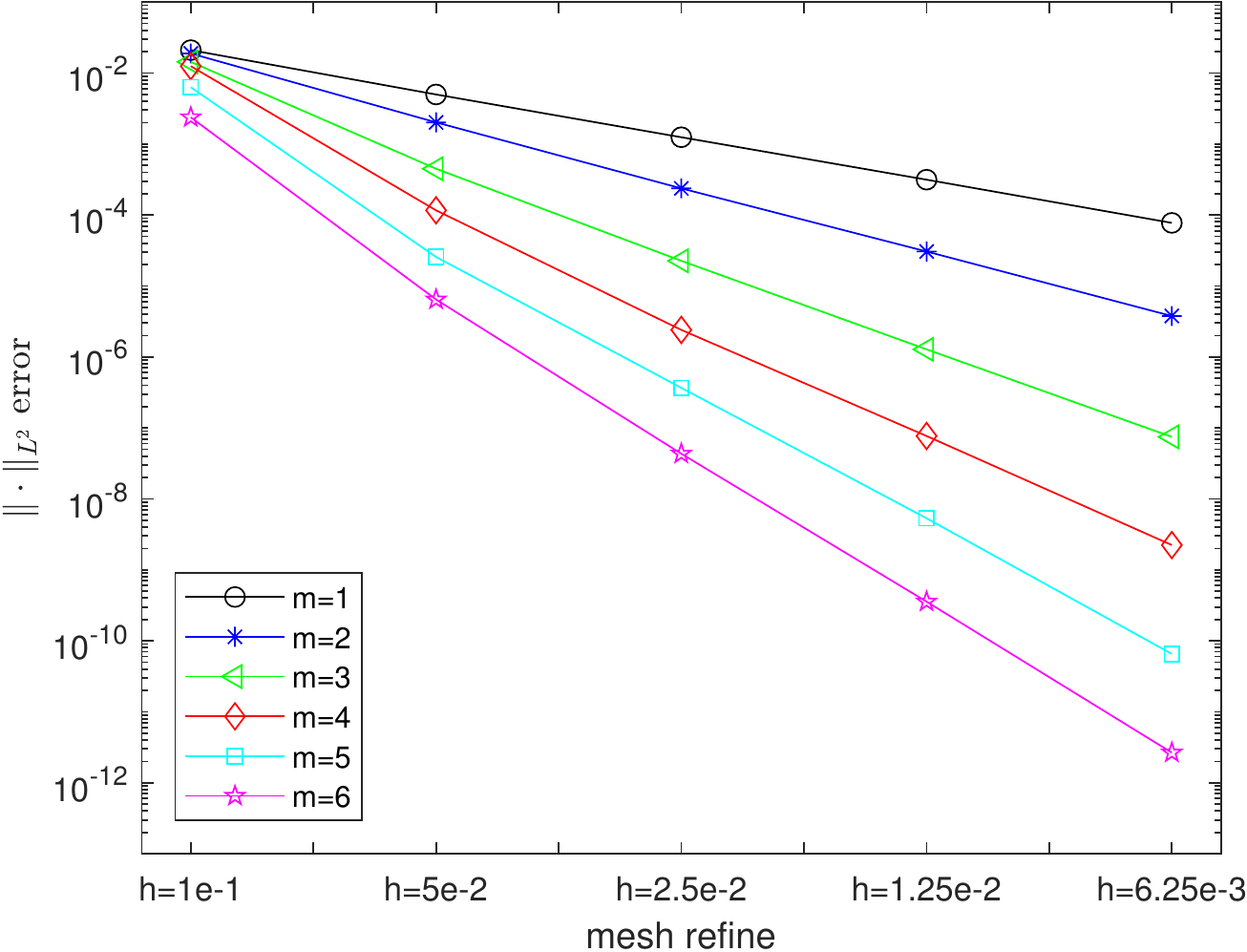}
    \includegraphics[width=0.48\textwidth]{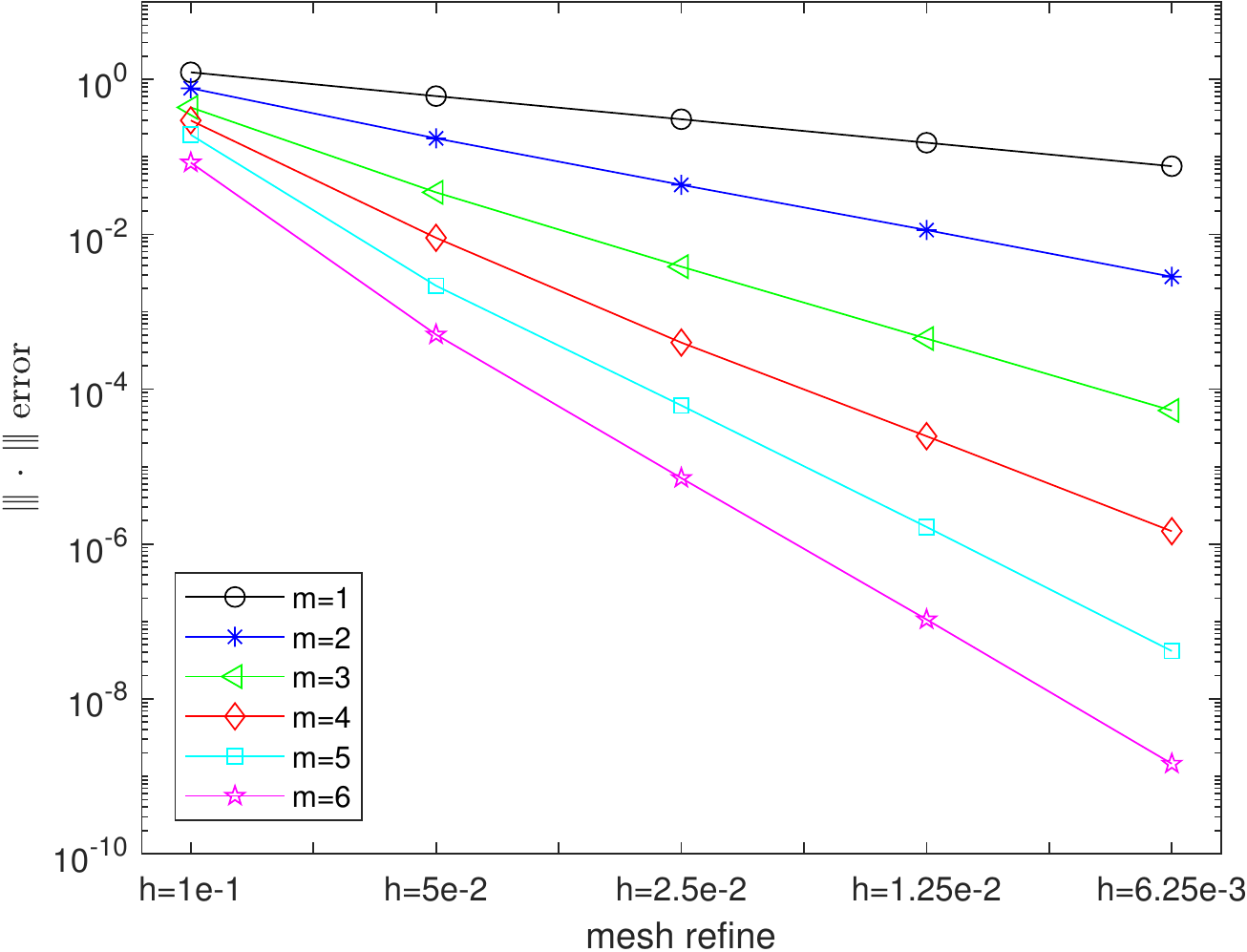}
    \caption{Convergence rate in $L^2$ norm (left) and the energy norm
      (right) for different reconstruction order $m$ on quadrilateral
      meshes for Example 1.}
    \label{quad_error}
  \end{center}
\end{figure}
\begin{table}[]
  \begin{center}
  \scalebox{1.0}{
    \begin{tabular}{|c|c|c|c|c|c|c|c|}
      \hline & & h=1.0e-1 & 5.0e-2 & 2.5e-2 & 1.25e-2 & 6.25e-3 &
      \\ \cline{3-7} \multirow{-2}{*}{$m$} & \multirow{-2}{*}{Norms} &
      error & error & error & error & error & \multirow{-2}{*}{Rate}
      \\ \hline

      & $\nm{u-u_h}{L^2}$ & 3.30e-02 & 7.41e-03 & 1.78e-03 & 4.47e-04
      & 1.06e-04 & 2.01 \\ \cline{2-8} \multirow{-2}{*}{1} &
      \multicolumn{1}{c|}{$\enernm{u-u_h}$} &
      \multicolumn{1}{c|}{1.51e+00} & 6.93e-01 & 3.48e-01 & 1.73e-01 &
      8.44e-02 & 1.01 \\ \hline

      & $\nm{u-u_h}{L^2}$ & 1.88e-02 & 2.01e-03 & 2.37e-04 & 3.05e-05
      & 3.77e-06 & 3.06 \\ \cline{2-8} \multirow{-2}{*}{2} &
      \multicolumn{1}{c|}{$\enernm{u-u_h}$} &
      \multicolumn{1}{c|}{7.70e-01} & 1.73e-01 & 4.35e-02 & 1.13e-02 &
      2.83e-03 & 2.01 \\ \hline

      & $\nm{u-u_h}{L^2}$ & 1.43e-02 & 4.48e-04 & 2.25e-05 & 1.27e-06
      & 7.47e-08 & 4.35 \\ \cline{2-8} \multirow{-2}{*}{3} &
      \multicolumn{1}{c|}{$\enernm{u-u_h}$} &
      \multicolumn{1}{c|}{4.35e-01} & 3.48e-02 & 3.82e-03 & 4.51e-04 &
      5.30e-05 & 3.22 \\ \hline

      & $\nm{u-u_h}{L^2}$ & 1.25e-02 & 1.16e-04 & 2.40e-06 & 7.68e-08
      & 2.23e-09 & 5.53 \\ \cline{2-8} \multirow{-2}{*}{4} &
      \multicolumn{1}{c|}{$\enernm{u-u_h}$} &
      \multicolumn{1}{c|}{2.95e-01} & 9.01e-03 & 4.00e-04 & 2.47e-05 &
      1.46e-06 & 4.37 \\ \hline

      & $\nm{u-u_h}{L^2}$ & 6.31e-03 & 2.56e-05 & 3.66e-07 & 5.32e-09
      & 6.53e-11 & 6.52 \\ \cline{2-8} \multirow{-2}{*}{5} &
      \multicolumn{1}{c|}{$\enernm{u-u_h}$} &
      \multicolumn{1}{c|}{1.93e-01} & 2.16e-03 & 6.16e-05 & 1.66e-06 &
      4.15e-08 & 5.46 \\ \hline

      & $\nm{u-u_h}{L^2}$ & 2.37e-03 & 6.44e-06 & 4.32e-08 & 3.56e-10
      & 2.65e-12 & 7.36 \\ \cline{2-8} \multirow{-2}{*}{6} &
      \multicolumn{1}{c|}{$\enernm{u-u_h}$} &
      \multicolumn{1}{c|}{8.43e-02} & 5.07e-04 & 7.10e-06 & 1.06e-07 &
      1.45e-09 & 6.38 \\ \hline
  \end{tabular}}\vspace{.3cm}
  \caption{Errors on the quadrilateral mesh for Example 1.}
  \label{quad_mesh}
  \end{center}
\end{table}
\vskip .5cm \textbf{Example 2.} This example is taken
from~\cite[Example 4.1]{brezzi2005family}. We consider the Dirichlet
boundary value problem in the unit square $(0,1)^2$ with the exact
solution
\[
  u(x,y)= x^{3} y^{2}+ x \sin(2\pi x y)\sin(2 \pi y).
\]
The coefficient matrix $A$ is taken as
\[
A(x,y)=\begin{pmatrix} (x+1)^2+y^2 & -xy \\ -xy & (x+1)^2
\end{pmatrix}.
\]
The force $f$ is then determined by the equation~\eqref{eq:ellgen}. We
solve this problem over a sequence of hexagonal meshes as shown in
Figure~\ref{hexa_mesh and mixed mesh}, which are generated by a
Voronoi tessellation. The mesh contains elements with different shapes such as hexagons, pentagons, and
quadrilaterals. The complexity of complicate element shape does not bring in
extra difficulties in implementation. The errors and convergence rate
are reported in Table~\ref{hexagon_mesh} and Figure~\ref{hexa_error},
respectively, which are agreed with the theoretical prediction.
\begin{figure}
  \begin{center}
    \includegraphics[width=0.4\textwidth]{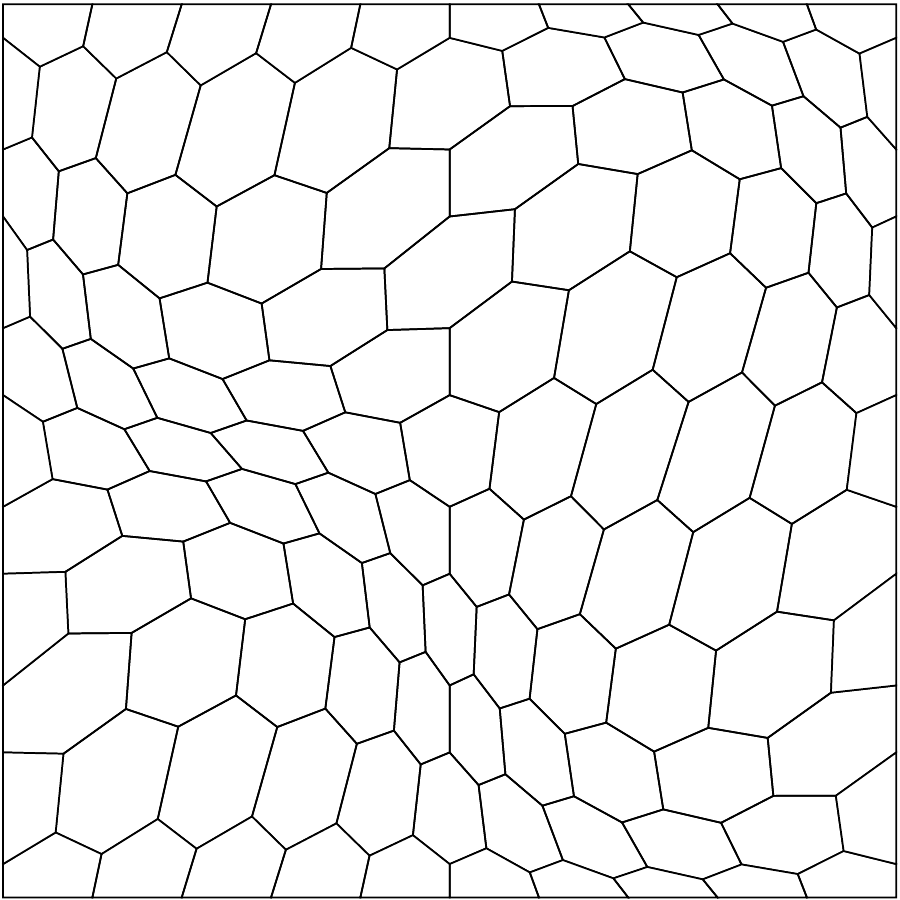}
    \includegraphics[width=0.4\textwidth]{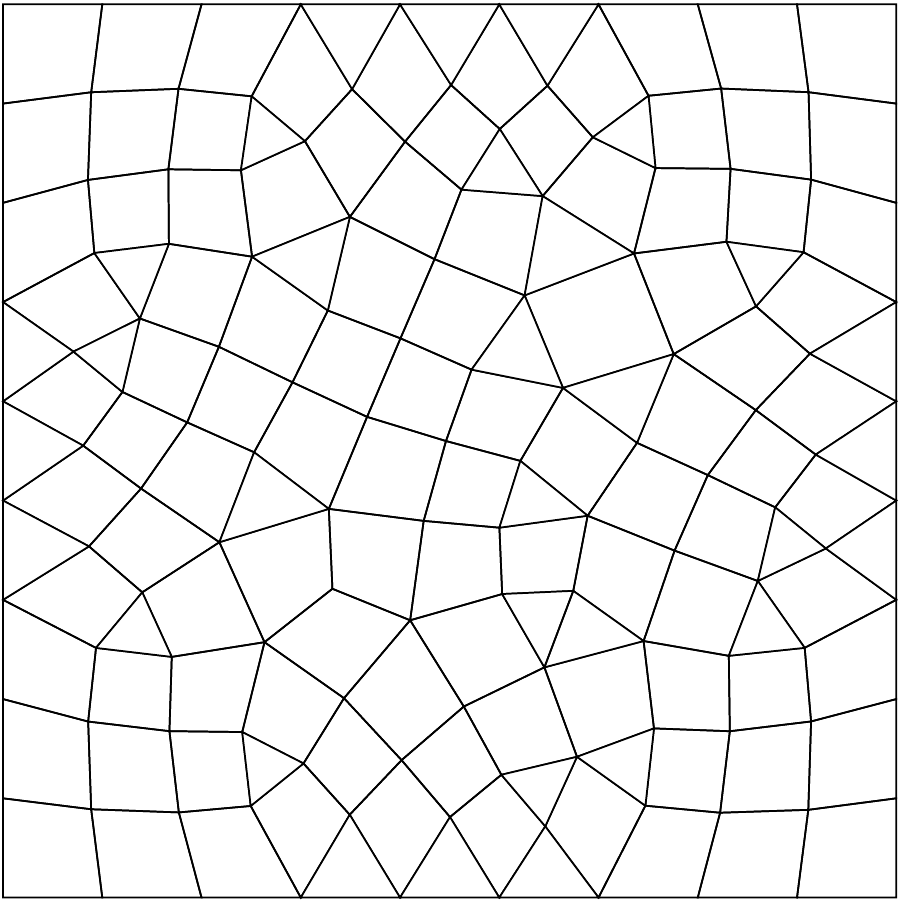}
    \caption{The hexagonal mesh and the mixed mesh for Example 2 and
      Example 3.}
    \label{hexa_mesh and mixed mesh}
  \end{center}
\end{figure}
\begin{figure}
  \begin{center}
    \includegraphics[width=0.48\textwidth]{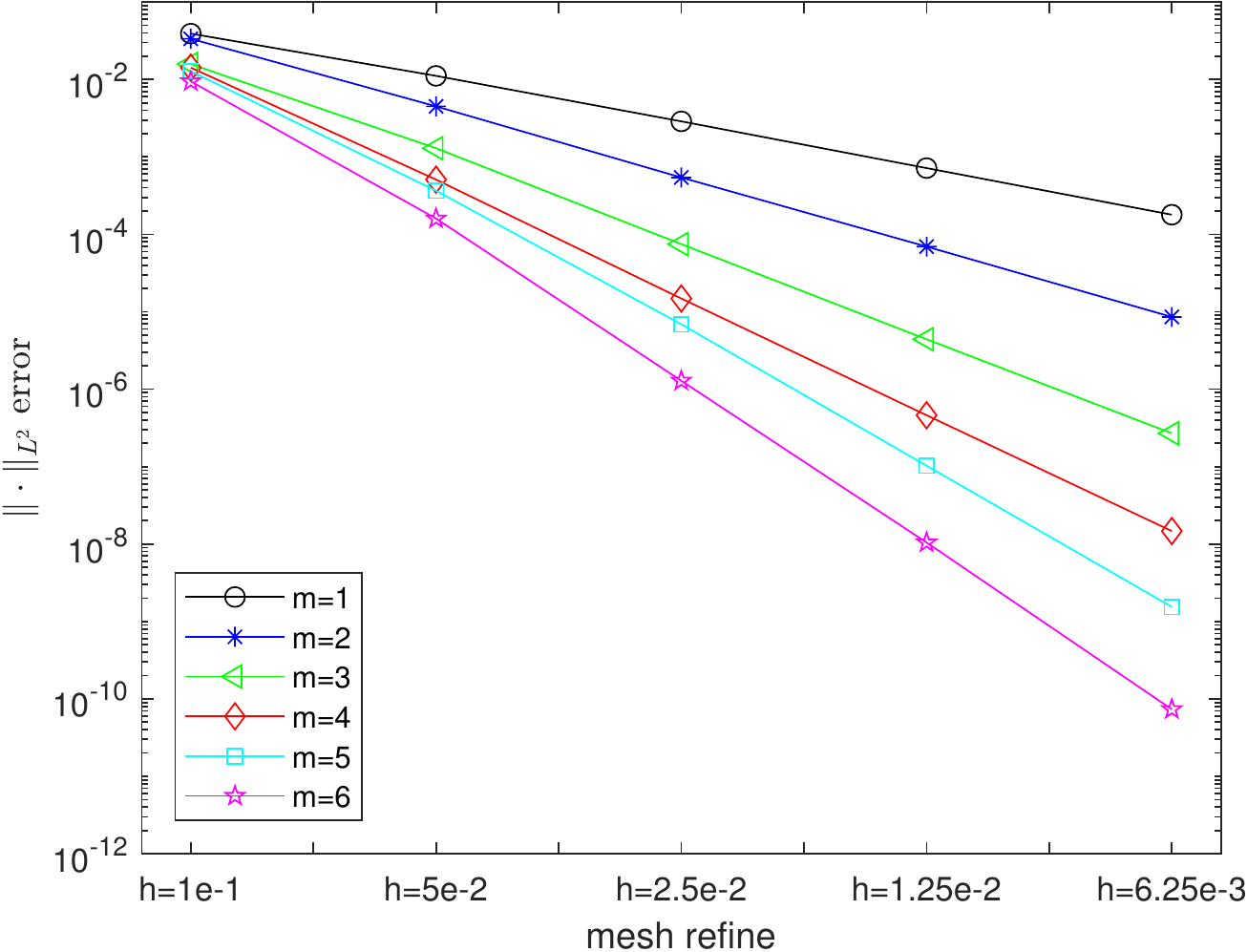}
    \includegraphics[width=0.48\textwidth]{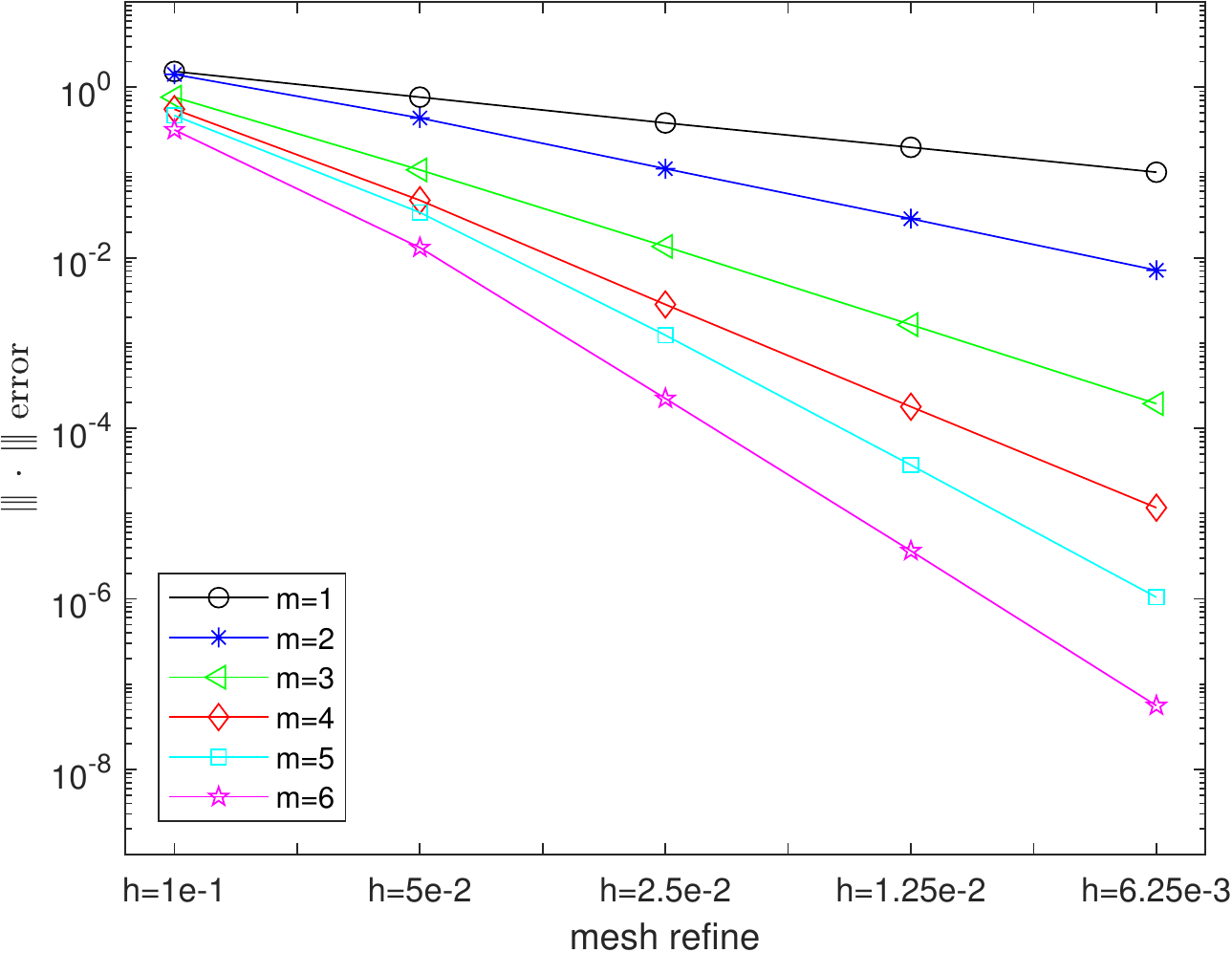}
    \caption{The convergence rate in $L^2$ norm (left) and the energy
      norm (right) for different reconstruction order $m$ over
      hexagonal meshes for Example 2.}\label{hexa_error}
  \end{center}
\end{figure}
\begin{table}[]
  \begin{center}
  \scalebox{1.0}{
    \begin{tabular}{|c|c|c|c|c|c|c|c|}
      \hline & & N=1.15e+2 & 4.30e+2 & 1.66e+3 & 6.52e+3 & 2.58e+4 &
      \\ \cline{3-7} \multirow{-2}{*}{$m$} & \multirow{-2}{*}{Norms} &
      error & error & error & error & error & \multirow{-2}{*}{Rate}
      \\ \hline

      & $\nm{u-u_h}{L^2}$ & 3.91e-02 & 1.11e-02 & 2.87e-03 & 7.17e-04
      & 1.79e-04 & 1.95 \\ \cline{2-8} \multirow{-2}{*}{1} &
      \multicolumn{1}{c|}{$\enernm{u-u_h}$} &
      \multicolumn{1}{c|}{1.53e+00} & 7.67e-01 & 3.82e-01 & 1.97e-01 &
      1.01e-01 & 0.98 \\ \hline

      & $\nm{u-u_h}{L^2}$ & 3.35e-02 & 4.50e-03 & 5.42e-04 & 6.94e-05
      & 8.57e-06 & 2.99 \\ \cline{2-8} \multirow{-2}{*}{2} &
      \multicolumn{1}{c|}{$\enernm{u-u_h}$} &
      \multicolumn{1}{c|}{1.41e+00} & 4.36e-01 & 1.11e-01 & 2.87e-02 &
      7.15e-03 & 1.92 \\ \hline

      & $\nm{u-u_h}{L^2}$ & 1.58e-02 & 1.29e-03 & 7.48e-05 & 4.41e-06
      & 2.69e-07 & 3.99 \\ \cline{2-8} \multirow{-2}{*}{3} &
      \multicolumn{1}{c|}{$\enernm{u-u_h}$} &
      \multicolumn{1}{c|}{7.67e-01} & 1.07e-01 & 1.35e-02 & 1.64e-03 &
      1.94e-04 & 2.99 \\ \hline

      & $\nm{u-u_h}{L^2}$ & 1.42e-02 & 5.09e-04 & 1.48e-05 & 4.61e-07
      & 1.47e-08 & 4.99 \\ \cline{2-8} \multirow{-2}{*}{4} &
      \multicolumn{1}{c|}{$\enernm{u-u_h}$} &
      \multicolumn{1}{c|}{5.53e-01} & 4.73e-02 & 2.83e-03 & 1.79e-04 &
      1.17e-05 & 3.91 \\ \hline

      & $\nm{u-u_h}{L^2}$ & 1.28e-02 & 3.65e-04 & 6.89e-06 & 1.02e-07
      & 1.54e-09 & 5.77 \\ \cline{2-8} \multirow{-2}{*}{5} &
      \multicolumn{1}{c|}{$\enernm{u-u_h}$} &
      \multicolumn{1}{c|}{4.68e-01} & 3.40e-02 & 1.23e-03 & 3.71e-05 &
      1.04e-06 & 4.74 \\ \hline

      & $\nm{u-u_h}{L^2}$ & 9.45e-03 & 1.59e-04 & 1.28e-06 & 1.04e-08
      & 7.33e-11 & 6.78 \\ \cline{2-8} \multirow{-2}{*}{6} &
      \multicolumn{1}{c|}{$\enernm{u-u_h}$} &
      \multicolumn{1}{c|}{3.17e-01} & 1.31e-02 & 2.23e-04 & 3.65e-06 &
      5.58e-08 & 5.67\\ \hline
  \end{tabular}}\vspace{.3cm}
  \caption{Errors on the hexagonal meshes for Examples 2. $N$ is the
    number of the total degrees of freedom.}\label{hexagon_mesh}
  \end{center}
\end{table}
\vskip .5cm \textbf{Example 3.} We consider a Neumann boundary value
problem in the unit square with the exact solution
\[
  u(x,y)=\exp\left(\frac{x^2+y^2}{2}\right)+\sin(2\pi (x+y))\sin(2\pi
  y),
\]
and we take the coefficients matrix as
\[
A(x,y)=\begin{pmatrix} 3+\cos(2\pi x) & x-y \\ x-y & 3-\sin(2\pi y)
\end{pmatrix}.
\]

The meshes are generated by {\em Gmsh}~\cite{geuzaine2009gmsh} again,
which contains both triangles and quadrilaterals as shown in
Figure~\ref{hexa_mesh and mixed mesh}. In this example, the sampling
points are randomly selected inside the element instead of the element
barycenters. We perturb each barycenter with a uniform distribution
random vector $\xi\in\mb{R}^2$ with $\abs{\xi}=0.1h_K$, which
guarantees the perturbed sampling points are still located in the
interior of the corresponding element. We show in
Figure~\ref{patch_ponits} an example of the perturbed sampling points.

Errors are given in Table~\ref{mixed_mesh} and
Figure~\ref{mixed_error}. Again we achieved the same convergence rates
as the previous examples. This indicates that the method is robust
with respect to the perturbation of the sampling points as shown in
Lemma~\ref{theorem:localapp}.
\begin{figure}
  \begin{center}
    \includegraphics[width=0.48\textwidth]{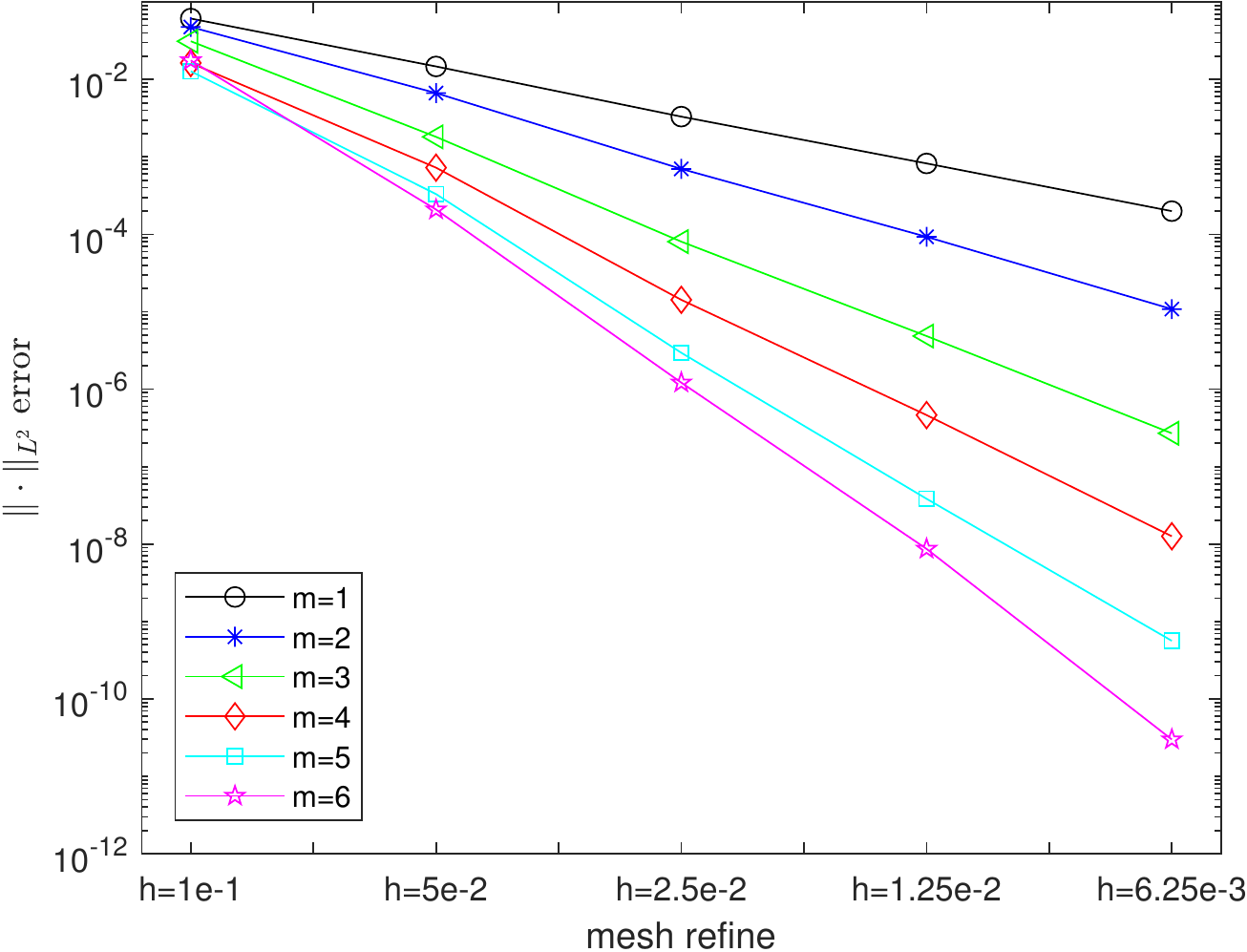}
    \includegraphics[width=0.48\textwidth]{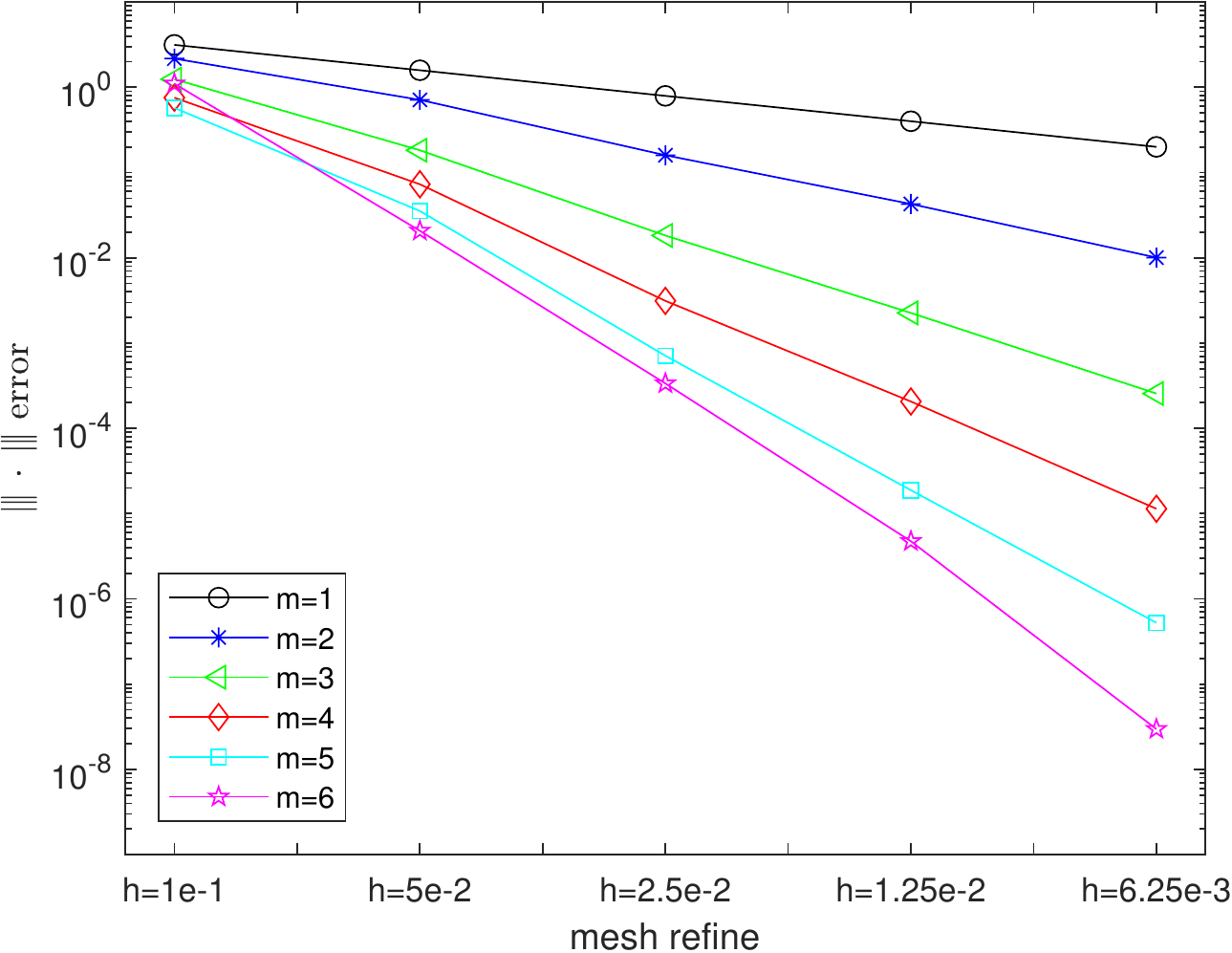}
    \caption{The convergence rate in $L^2$ norm (left) and the energy
      norm (right) for different $m$ for Example 3.}
  \label{mixed_error}
  \end{center}
\end{figure}
\begin{table}[]
  \begin{center}
  \scalebox{1.0}{
    \begin{tabular}{|c|c|c|c|c|c|c|c|}
      \hline & & N=1.29e+2 & 5.10e+2 & 2.33e+3 & 9.24e+3 & 3.80e+4 &
      \\ \cline{3-7} \multirow{-2}{*}{$m$} & \multirow{-2}{*}{Norms} &
      error & error & error & error & error & \multirow{-2}{*}{Rate}
      \\ \hline

      & $\nm{u-u_h}{L^2}$ & 6.66e-02 & 1.48e-02 & 3.47e-03 & 8.27e-04
      & 2.01e-04 & 2.07 \\ \cline{2-8} \multirow{-2}{*}{1} &
      \multicolumn{1}{c|}{$\enernm{u-u_h}$} &
      \multicolumn{1}{c|}{3.35e+00} & 1.57e+00 & 8.44e-01 & 4.04e-01 &
      2.02e-01 & 0.99\\ \hline

      & $\nm{u-u_h}{L^2}$ & 4.76e-02 & 6.68e-03 & 7.00e-04 & 9.29e-05
      & 1.08e-05 & 3.04 \\ \cline{2-8} \multirow{-2}{*}{2} &
      \multicolumn{1}{c|}{$\enernm{u-u_h}$} &
      \multicolumn{1}{c|}{2.17e+00} & 7.12e-01 & 1.59e-01 & 4.28e-02 &
      1.00e-02 & 1.96 \\ \hline

      & $\nm{u-u_h}{L^2}$ & 3.11e-02 & 1.81e-03 & 8.04e-05 & 4.85e-06
      & 2.69e-07 & 4.22 \\ \cline{2-8} \multirow{-2}{*}{3} &
      \multicolumn{1}{c|}{$\enernm{u-u_h}$} &
      \multicolumn{1}{c|}{1.24e+00} & 1.82e-01 & 1.82e-02 & 2.25e-03 &
      2.56e-04 & 3.08\\ \hline

      & $\nm{u-u_h}{L^2}$ & 1.63e-02 & 7.26e-04 & 1.42e-05 & 4.63e-07
      & 1.26e-08 & 5.12 \\ \cline{2-8} \multirow{-2}{*}{4} &
      \multicolumn{1}{c|}{$\enernm{u-u_h}$} &
      \multicolumn{1}{c|}{7.55e-01} & 7.29e-02 & 3.12e-03 & 2.06e-04 &
      1.13e-05 & 4.05 \\ \hline

      & $\nm{u-u_h}{L^2}$ & 1.27e-02 & 3.31e-04 & 2.95e-06 & 3.84e-08
      & 5.62e-10 & 6.19 \\ \cline{2-8} \multirow{-2}{*}{5} &
      \multicolumn{1}{c|}{$\enernm{u-u_h}$} &
      \multicolumn{1}{c|}{5.73e-01} & 3.54e-02 & 7.10e-04 & 1.88e-05 &
      5.23e-07 & 5.10 \\ \hline

      & $\nm{u-u_h}{L^2}$ & 1.76e-02 & 2.09e-04 & 1.21e-06 & 8.66e-09
      & 2.99e-11 & 7.28 \\ \cline{2-8} \multirow{-2}{*}{6} &
      \multicolumn{1}{c|}{$\enernm{u-u_h}$} &
      \multicolumn{1}{c|}{1.10e+00} & 2.07e-02 & 3.36e-04 & 4.74e-06 &
      2.97e-08 & 6.24 \\ \hline
  \end{tabular}}
  \caption{Errors and convergence rates for Example 3.}
  \label{mixed_mesh}
  \end{center}
\end{table}
\section{Conclusions}~\label{sec:conclusion}
Using a least-squares patch reconstruction, we construct a new
discontinuous finite element space on polygonal mesh, which together
with the variational formulation of DG method
gives a new approximation method for partial differential equations. A
novelty of this method is that arbitrary-order accuracy has been achieved with
only one degree of freedom on each element, while the shape of the
element may be arbitrary. Optimal error estimates have been proved and
a variety of numerical examples demonstrate the superior performance
of the method. It would be interesting to consider the $h-m$ version of the proposed method and the corresponding adaptive
refinement strategy, and to consider the choice of the interior
penalty parameter that allow for edge/face degeneration as in~\cite{cangiani2014hp}, which is key for implementation the method on more general polytopic mesh. Moreover, the assumption on the element patch may be
further weakened, which may render more flexibility for the method. We shall leave all these issues to further exploration.
\section*{Acknowledgment}
The authors would like to thank Dr. Fengyang Tang for his help in the
earlier stage of the present work.
%

\end{document}